\newlength\commLen
\newcommand{\mcaption}[3]{
  \ifthenelse{\isempty{#2}}
             {\caption{\textit{#3}\label{#1}}}
             {\caption[#2]{\textit{{\sc #2.}~#3}\label{#1}}}
             }
\newcommand{\algcaption}[3]{
        \ifthenelse{\isempty{#3}}
                   {\caption[#1]{{\sc #2.} \label{#1}}}
                   {\caption[#1]{{\sc #2.} \newline\small{#3} \label{#1}}}
        }
\newcommand{\bbm}{\begin{bmatrix}}
\newcommand{\ebm}{\end{bmatrix}}
\newcommand{\beq}{\begin{equation}}
\newcommand{\beqn}{\begin{equation*}}
\newcommand{\eeq}{\end{equation}}
\newcommand{\eeqn}{\end{equation*}}
\def\figkeyword{Fig.~}
\newcommand\reftext[3][]{\hyperref[#2]{#3\ref{#2}#1}}
\newcommand\eqreftext[3][]{\hyperref[#2]{#3\eqref{#2}#1}}
\newcommand\pr[1]{\prettyref{#1}}
\numberwithin{equation}{section}
\def\bigO{\mathscr{O}}
\renewcommand\O[1]{\ensuremath{\bigO\left(#1\right)}}
\def\Xint#1{\mathchoice
   {\XXint\displaystyle\textstyle{#1}}%
   {\XXint\textstyle\scriptstyle{#1}}%
   {\XXint\scriptstyle\scriptscriptstyle{#1}}%
   {\XXint\scriptscriptstyle\scriptscriptstyle{#1}}%
   \!\int}
\def\XXint#1#2#3{{\setbox0=\hbox{$#1{#2#3}{\int}$}
     \vcenter{\hbox{$#2#3$}}\kern-.5\wd0}}
\def\dashint{\Xint-}
\newtheorem{thm}{Theorem}
\newtheorem{lem}{Lemma}
\newcommand{\bR}{\mathbb{R}}
\title{An integral equation formulation for rigid bodies in Stokes flow in three dimensions}
\author{Eduardo Corona~\footnotemark[1] , Leslie Greengard~\footnotemark[2] , Manas Rachh~\footnotemark[2] , 
Shravan Veerapaneni~\footnotemark[1]}
\date{}
\begin{document}
\maketitle

\renewcommand{\thefootnote}{\fnsymbol{footnote}}
\footnotetext[1]{Department of Mathematics, University of Michigan}
\footnotetext[2]{Courant Institute of Mathematical Sciences, New York University}

\begin{abstract}
We present a new derivation of a boundary integral equation (BIE) for 
simulating the three-dimensional dynamics of arbitrarily-shaped rigid particles of genus zero immersed in a Stokes fluid, on which are prescribed
forces and torques. Our method is based on a single-layer representation
and leads to a simple second-kind integral equation. It avoids
the use of auxiliary sources within each particle that play a role in
some classical formulations. We use a spectrally accurate quadrature scheme to evaluate the corresponding layer potentials, so that only a small number of 
spatial discretization points per particle are required. The resulting discrete sums are computed in $\mathcal{O}(n)$ time, where $n$ denotes the number of particles, using the fast multipole method (FMM). The particle positions and orientations are updated by a high-order time-stepping scheme. We illustrate
the accuracy, conditioning and scaling of our solvers with several
 numerical examples.
\end{abstract}

\section{Introduction}
\label{sec:intro}
In viscous flows, the mobility 
problem consists of computing the translational and 
rotational velocities $(\vc[v]_i,\vc[\omega]_i)$ induced
on a collection of $n$ rigid bodies when prescribed 
forces and torques $(\vc[F]_i,\vc[T]_i)$ are specified on each one.
The Stokes equations, which are linear, govern the ambient viscous fluid at vanishing Reynolds number limit. Thereby, there exists a well-defined 
\emph{mobility matrix} denoted by
$M$ such that
\beq \vc[V] = M \vc[F] \, , \label{eq:mobility}\eeq
where $\vc[V] = (\vc[v]_1,\vc[\omega]_1,\dots,\vc[v]_n,\vc[\omega]_n)$ and $\vc[F] = (\vc[F]_1,\vc[T]_1,\dots,\vc[F]_n,\vc[T]_n)$.

Reformulating the problem as an integral equation has several
advantages over direct discretization of the governing partial
differential equations themselves. First,
integral equation methods require discretization of the particle 
boundaries alone, which leads to an immediate reduction in the size of the
discretized linear system.  Equally important,
carefully chosen integral representations result in 
well-conditioned linear systems, while 
discretizing the Stokes equations directly
leads to highly ill-conditioned systems.
Moreover, the integral representation can be chosen
to satisfy the far field boundary conditions
necessary to model an open system, thereby eliminating the need
for artificial truncation of  the computational domain. 
Lastly, combining high-order quadrature methods and suitable
fast algorithms, boundary integral equations for complex geometries
can be solved to high accuracy in optimal or near optimal time 
\cite{rokhlin1997,CHEW,LIU,Nishimura,kifmm04ying}.

A variety of integral 
representations for the mobility problem have been introduced, 
often in the form of 
first kind integral equations
\cite{Cortez2005} or second kind integral equations with $n$ additional
unknowns and an equal number of additional constraints
\cite{Kropinski1999}. While these have been shown to be very effective,
it is advantageous
(when $n$ is large and the rigid bodies have complicated shape)
to work with well-conditioned second kind boundary integral formulations which
are free of additional constraints.
Such schemes have been developed earlier \cite{karrila1989integral}, 
using the Lorentz reciprocal identity. The equation which we derive below
in Section \ref{sec:formulation} is essentially the same, but obtained
using a different principle - namely that 
the interior of a rigid body must be stress free. 
The mobility problem can also be solved 
using a double layer representation that doesn't involve  additional unknowns.
For a detailed discussion of the latter approach,
we refer the reader to \cite{pozrikidis1992boundary,af2014fast}. 
Our formulation has the advantage that certain derivative quantities, 
such as fluid stresses, can be computed using integral operators with
 weakly singular kernels instead of hypersingular ones, 
 simplifying the quadrature issues. 

Based on this formulation, we present a numerical algorithm in Section \ref{sec:discretization} to solve the mobility problem and evolve the position and orientation of the rigid bodies. 
In Section \ref{sec:numerical}, we discuss several applications and present results from our numerical experiments (a sample simulation with large $n$ is depicted in Figure \ref{eyecandy}). 

\begin{figure}[H]
\centering
\includegraphics[width=\textwidth]{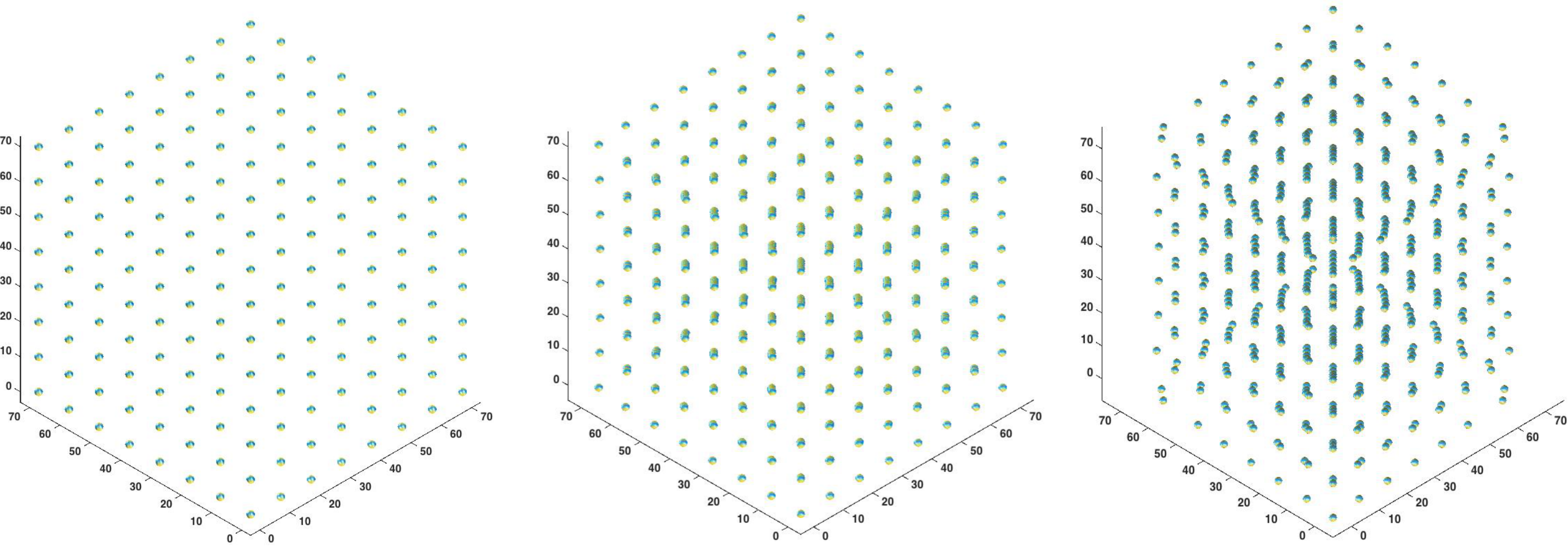}
\mcaption{fig:MHD_8x8x8_snapshots}{Self-assembly of chains in a magnetorheological fluid} {Snapshots from a simulation of a cubic lattice of $512$ paramagnetic spheres subjected to a uniform magnetic field $\vc[H]_0$ in a Stokesian flow. We use spherical harmonic expansions of degree $p=8$ to represent functions on each sphere, requiring a total of $294912$ degrees-of-freedom per time-step to compute hydrodynamic and magnetic interactions. 
At each time-step, a magnetostatic problem is solved for the current particle configuration; the Maxwell stresses thus obtained at the particle boundaries give rise to a mobility poblem (details in Section \ref{sec:numerical}). Our BIE formulations for both the fluid velocity and magnetic potential problems lead to well-conditioned linear systems. Summation of far-field interactions is accelerated via Stokes and Laplace FMMs \cite{toolboxfmm}. On an average, this simulation took $5$ minutes per time-step on a single node with Intel~Xeon~E--$2690$~v$2$($3.0$~GHz) processor and $24$ GB of RAM. } \label{eyecandy}
\end{figure}

\section{The mobility problem}
\label{sec:formulation}
Let $\{D_i\}_{i=1}^n$ be a set of $n$ disjoint rigid bodies in $\mathbb{R}^3$ with boundaries $\Gamma_i$. 
Let $\vc[F]_{i},\vc[T]_{i}$ denote the force and torque exerted on $D_{i}$,
and let $\vc[v]_{i},\vc[\omega]_{i}$ denote the translational and rotational
velocity of $D_{i}$.
Let $E$ be the domain exterior to all of the rigid bodies $\{ D_i \}$, and assume 
that the fluid in $E$ is governed by Stokes flow with viscosity $\mu=1$. 
For a given velocity field
$\vc[u](\vc[x]) \in \mathbb{R}^3$ at a point $\vc[x] \in E$, we denote the 
corresponding fluid pressure, strain and stress tensors by
$p$, $e\left(\vc[u] \right)$ and $\sigma$, respectively.
On the surface of the rigid bodies, $\vc[f] = \sigma \cdot \vc[n]$ 
denotes the surface force or surface traction exerted by the fluid 
on the rigid body $D_{i}$. 
For the sake of simplicity, we assume there are no volume forces. 
The governing equations for the mobility problem are then given by: 

\beq -\Delta \vc[u] + \vc[\nabla] p = \vc[0], \ \ \ \ \vc[\nabla] \cdot \vc[u] = 0 \ \ \forall \vc[x] \in E \label{eq:Stokes}, \eeq
\beq \vc[u](\vc[x]) = \vc[v]_i + \vc[\omega]_i \times (\vc[x] - \vc[x]^c_i) \ \ \forall \vc[x] \in \label{eq:RigidBody} \Gamma_i, \eeq
\beq \int_{\Gamma_i} \vc[f] \, dS_{y} = \int_{\Gamma_i} \sigma \cdot \vc[n] \, dS_{y} = -\vc[F]_i, 
\ \ \ \int_{\Gamma_i} (\vc[x] - \vc[x]^c_i) \times \vc[f] \, dS_{y} = -\vc[T]_i, \label{eq:ForceTorque} \eeq
\beq \vc[u](\vc[x]) \rightarrow \vc[0] \ \ \mathrm{as} \ \ |\vc[x]| \rightarrow \infty. \label{eq:Infinity} \eeq
It should be noted that the forces and torques, $\vc[F]_{i}$, and $\vc[T]_{i}$ are {\em known} and the translational and rotational velocities $\vc[v]_{i}$, 
and $\vc[\omega]_{i}$ are {\em unknown}.

Before turning to the integral equation, however, we state and prove
a simple uniqueness result.
To prove uniqueness, we need the following lemma contained in \cite{pozrikidis1992boundary}
\begin{lem} \label{lem:growth3dStokes}
Let $\vc[u]$ solve Stokes equation in the exterior domain $E \subseteq \mathbb{R}^{3}$ defined above,
satisfying the condition \eqref{eq:Infinity}.
Let $B_{R}\left(0\right)$ be the ball of radius $R$ centered at
the origin and let $\partial B_{R}\left(0\right)$ be its boundary. Then,
there exist $M,R_{0}$ such that $\sup_{\partial B_{R}\left(0\right)}\left| \vc[u]\right|\leq\frac{M}{R}$
and $\sup_{\partial B_{R}\left(0\right)}\left| \sigma \right|\leq\frac{M}{R^{2}}$
for all $R\geq R_{0}$. From these decay conditions at $\infty$ it also follows that
\begin{equation}
\lim_{R\to\infty}\int_{\partial B_{R}\left(0\right)} \left(\vc[u],\vc[f] \right) \, dS_{y}  \to0 \, . \label{eq:ForceDecayatInfty_chap23D}
\end{equation}
\end{lem}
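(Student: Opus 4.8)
The plan is to reduce everything to the far-field behavior of an exterior Stokes flow and to read off the decay rates from the fundamental solution. Since the bodies $\{D_i\}$ are bounded, I would first enclose them all in a ball $B_{R_0}(0)$ chosen so that $\bigcup_i D_i \subset B_{R_0}(0)$. Outside this ball the field $\vc[u]$ solves the homogeneous Stokes equations \eqref{eq:Stokes} and satisfies the decay condition \eqref{eq:Infinity}. On this exterior region I would apply the integral representation formula for Stokes flow, writing $\vc[u](\vc[x])$ for $|\vc[x]| > R_0$ as the sum of a single-layer potential with density $\vc[f] = \sigma \cdot \vc[n]$ and a double-layer potential with density $\vc[u]$, both carried by the fixed sphere $\partial B_{R_0}(0)$. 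The contribution from the sphere at infinity drops out precisely because of \eqref{eq:Infinity}, so no far-field boundary term survives and the densities live on a bounded surface where they are bounded.

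The second step is a far-field expansion of the two kernels. Writing $G$ for the Stokeslet and $T$ for the associated double-layer (stresslet) kernel, for $\vc[y] \in \partial B_{R_0}(0)$ fixed and $|\vc[x]| \to \infty$ one has $G(\vc[x]-\vc[y]) = G(\vc[x]) + \O{|\vc[x]|^{-2}}$ with $G(\vc[x])$ itself of size $\O{|\vc[x]|^{-1}}$, while $T$ already decays like $|\vc[x]|^{-2}$. Integrating against the bounded densities over the bounded surface $\partial B_{R_0}(0)$ therefore yields $\sup_{\partial B_R(0)} |\vc[u]| \le M/R$ for $R \ge R_0$, which is the first assertion. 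For the stress I would differentiate this representation once: both the velocity gradient and the pressure (generated by the Stokeslet pressure kernel, of size $\O{|\vc[x]|^{-2}}$) decay one order faster, giving $\sup_{\partial B_R(0)} |\sigma| \le M/R^2$.

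The final identity \eqref{eq:ForceDecayatInfty_chap23D}, which I read as the vanishing of the surface-traction power $\vc[u]\cdot\vc[f]$, then follows by a crude size count: on $\partial B_R(0)$ we have $|\vc[u]| \le M/R$ and $|\vc[f]| = |\sigma \cdot \vc[n]| \le M/R^2$, while the area of the sphere is $4\pi R^2$, so
\beqn \left| \int_{\partial B_R(0)} \vc[u] \cdot \vc[f] \, dS_y \right| \le \left(\sup_{\partial B_R(0)} |\vc[u]|\right)\left(\sup_{\partial B_R(0)} |\vc[f]|\right) 4\pi R^2 \le \frac{4\pi M^2}{R} \longrightarrow 0. \eeqn
Note that the two decay rates must be combined multiplicatively here; the integral of $\vc[u]$ alone over $\partial B_R(0)$ does \emph{not} vanish (it grows like $R$), which is why the relevant quantity is the pairing $\vc[u]\cdot\vc[f]$ rather than either factor separately.

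The main obstacle is the first step: justifying that a field known only to vanish at infinity, with no a priori rate, in fact inherits the Stokeslet decay $1/R$. The content there is that the condition \eqref{eq:Infinity} rules out the slowly decaying and growing homogeneous modes (constant or linearly growing pressures and their associated velocities), so that the representation on $\partial B_{R_0}(0)$ is built solely from the algebraically decaying fundamental solution; once the representation is in hand, the rates are immediate from the explicit kernels and the two supremum bounds drive the remainder. Since the statement is quoted from \cite{pozrikidis1992boundary}, I would lean on the version of this representation and far-field expansion recorded there.
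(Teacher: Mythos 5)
The paper offers no proof of this lemma: it is stated as a result ``contained in \cite{pozrikidis1992boundary}'' and used as a black box, so there is nothing internal to compare your argument against. Judged on its own terms, your proposal is sound in outline and fully correct in the one step the lemma itself actually derives, namely that the decay rates imply \eqref{eq:ForceDecayatInfty_chap23D}: the bound $\left|\int_{\partial B_R(0)} \mathbf{u}\cdot\mathbf{f}\, dS\right| \le (M/R)(M/R^{2})\,4\pi R^{2} = 4\pi M^{2}/R \to 0$ is exactly the intended argument, and your observation that the two rates must be combined multiplicatively against the growing area factor is the right point to emphasize.

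The place where your sketch is genuinely incomplete --- and you flag it yourself --- is the derivation of the rates $|\mathbf{u}|\le M/R$ and $|\sigma|\le M/R^{2}$ from the purely qualitative hypothesis $\mathbf{u}\to\mathbf{0}$. As written, the representation-formula route is circular: to obtain the exterior Green representation on $\partial B_{R_0}(0)$ with no contribution from infinity, you must show that the outer-sphere terms of the reciprocal identity on the annulus $B_R\setminus B_{R_0}$ vanish as $R\to\infty$. The double-layer term is controlled by $\sup_{\partial B_R}|\mathbf{u}|$ alone and does vanish, but the single-layer term $\int_{\partial B_R} G(\mathbf{x},\mathbf{y})\,\mathbf{f}(\mathbf{y})\,dS_y$ involves the traction on $\partial B_R$, whose decay is precisely what you are trying to prove; with only $\mathbf{u}\to\mathbf{0}$ in hand its naive size is $\mathcal{O}\bigl(R\,\sup_{\partial B_R}|\mathbf{f}|\bigr)$, which does not obviously tend to zero. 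The standard ways to close the loop are either (i) to expand the solution exterior to $B_{R_0}$ in Lamb's general solution (solid spherical harmonics), where the condition $\mathbf{u}\to\mathbf{0}$ annihilates every constant and growing mode and leaves a series whose leading velocity term is $\mathcal{O}(1/R)$ and whose stress is $\mathcal{O}(1/R^{2})$; or (ii) to note that the outer-sphere contribution is, for each $R$, a Stokes flow regular throughout $B_R$, that these converge locally uniformly to an entire Stokes flow equal to $\mathbf{u}$ minus the inner-sphere contribution, and that this entire flow vanishes at infinity and is therefore identically zero by a Liouville-type argument. Either repair is routine, and deferring it to \cite{pozrikidis1992boundary} is consistent with what the authors themselves do; but a self-contained proof needs one of them.
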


\begin{lem}
\label{lem:BoundaryTermZero3D}
If $\vc[u]$ satisfies equations \eqref{eq:RigidBody},
and \eqref{eq:ForceTorque} 
with $\vc[F]_{i} = \mathbf{0}$
and $\vc[T]_{i} = \mathbf{0}$ then
\begin{equation}
\int_{\Gamma_i} \left(\vc[u], \vc[f] \right) \, dS_{y} = 0 \, . 
\end{equation}
\end{lem}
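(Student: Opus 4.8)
The plan is to prove this by direct substitution, using only the rigid-body constraint \eqref{eq:RigidBody} together with the definitions of net force and torque in \eqref{eq:ForceTorque}; no decay estimate or interior-PDE argument enters here, so Lemma~\ref{lem:growth3dStokes} plays no role in this particular statement. The essential observation is that on each surface $\Gamma_i$ the field $\vc[u]$ is not arbitrary but equals the rigid-body velocity $\vc[v]_i + \vc[\omega]_i \times (\vc[x] - \vc[x]^c_i)$, whose coefficients $\vc[v]_i$ and $\vc[\omega]_i$ are \emph{constant} over $\Gamma_i$ and can therefore be pulled outside the surface integral.

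First I would insert \eqref{eq:RigidBody} into the integrand $(\vc[u],\vc[f]) = \vc[u]\cdot\vc[f]$ and split the surface integral into a translational and a rotational part,
\[
\int_{\Gamma_i}(\vc[u],\vc[f])\,dS_y = \vc[v]_i\cdot\!\int_{\Gamma_i}\vc[f]\,dS_y + \int_{\Gamma_i}\bigl(\vc[\omega]_i\times(\vc[x]-\vc[x]^c_i)\bigr)\cdot\vc[f]\,dS_y ,
\]
where the constant vector $\vc[v]_i$ has already been extracted from the first integral.

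Next I would identify the two integrals with the net force and torque. The first is exactly $\int_{\Gamma_i}\vc[f]\,dS_y = -\vc[F]_i$ by \eqref{eq:ForceTorque}. For the second, the one step I would carry out with care — since it is the only place a sign or index slip could creep in — is the scalar triple-product rearrangement $(\vc[\omega]_i\times\vc[r])\cdot\vc[f] = \vc[\omega]_i\cdot(\vc[r]\times\vc[f])$ with $\vc[r] = \vc[x]-\vc[x]^c_i$; after this the constant $\vc[\omega]_i$ comes out of the integral and leaves $\vc[\omega]_i\cdot\int_{\Gamma_i}(\vc[x]-\vc[x]^c_i)\times\vc[f]\,dS_y = \vc[\omega]_i\cdot(-\vc[T]_i)$, again by \eqref{eq:ForceTorque}.

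Finally, invoking the hypotheses $\vc[F]_i=\mathbf{0}$ and $\vc[T]_i=\mathbf{0}$ makes both terms vanish, giving $\int_{\Gamma_i}(\vc[u],\vc[f])\,dS_y = \vc[v]_i\cdot(-\vc[F]_i)+\vc[\omega]_i\cdot(-\vc[T]_i)=0$. I do not expect a genuine analytic obstacle: the identity is a purely algebraic consequence of the rigid-body structure of $\vc[u]$ on $\Gamma_i$ and of the vanishing prescribed loads, and the only real care needed is the sign-consistent handling of the triple product and the fact that $\vc[v]_i$ and $\vc[\omega]_i$ are surface constants that may be extracted from the integrals.
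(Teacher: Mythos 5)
Your proof is correct and follows exactly the paper's argument: substitute the rigid-body form of $\vc[u]$ from \eqref{eq:RigidBody}, use the scalar triple product to identify the two resulting integrals with $-\vc[F]_i$ and $-\vc[T]_i$ via \eqref{eq:ForceTorque}, and conclude from the vanishing of the prescribed loads. The paper's proof is a one-line version of precisely this computation.
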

\begin{proof}
\begin{equation}
\int_{\Gamma_i} \left(\vc[u], \vc[f] \right) \, dS_{y} = 
\int_{\Gamma_{i}} \left( \vc[v]_{i}+\boldsymbol{\omega}_{i} \times \left(\vc[x] - 
\vc[x]^{c}_i \right),\vc[f]\right) \, dS_{y} = -\left(\vc[u]_i, \vc[F]_{i} \right) - \left(\vc[\omega]_{i},\vc[T]_{i}\right) = 0 \, .
\end{equation}
\end{proof}

\begin{lem}
\label{Uniqueness mobility3D} If $\vc[u](\vc[x])$
satisifies equations \eqref{eq:Stokes},
\eqref{eq:RigidBody}, \eqref{eq:ForceTorque} and \eqref{eq:Infinity} with $\vc[F]=\mathbf{0}$,
then $\vc[u](\vc[x]) \equiv \mathbf{0}$ in $E$.
\end{lem}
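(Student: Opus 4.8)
The plan is to run the classical viscous--dissipation (energy) argument for Stokes flow on the exterior domain, relying on the two preceding lemmas to annihilate every boundary contribution. First I would fix a radius $R$ large enough that $B_R(0)$ contains all the bodies and work on the truncated fluid domain $\Omega_R = E \cap B_R(0)$, whose boundary is the union of the particle surfaces $\Gamma_1,\dots,\Gamma_n$ and the outer sphere $\partial B_R(0)$. Writing the stress as $\sigma = -p\,I + \bigl(\nabla\vc[u]+\nabla\vc[u]^{\mathsf T}\bigr)$ and noting that the momentum equation \eqref{eq:Stokes} is exactly $\nabla\cdot\sigma = \vc[0]$, I would dot this with $\vc[u]$ and integrate by parts over $\Omega_R$. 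Incompressibility $\nabla\cdot\vc[u]=0$ kills the pressure term, and the bulk integrand reduces to the pointwise-nonnegative quantity $\tfrac12\bigl|\nabla\vc[u]+\nabla\vc[u]^{\mathsf T}\bigr|^2$ (a positive multiple of $|e(\vc[u])|^2$), yielding
\begin{equation*}
\int_{\Omega_R} \tfrac12\,\bigl|\nabla\vc[u]+\nabla\vc[u]^{\mathsf T}\bigr|^2 \, dV
= \sum_{i=1}^{n} \int_{\Gamma_i} \left(\vc[u], \vc[f]\right) dS_y + \int_{\partial B_R(0)} \left(\vc[u], \vc[f]\right) dS_y ,
\end{equation*}
where $\vc[f]=\sigma\cdot\vc[n]$ is taken with the outward normal of $\Omega_R$; on each $\Gamma_i$ this coincides with the traction in \eqref{eq:ForceTorque} up to a sign that is immaterial here.

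Next I would show that the entire right-hand side vanishes. The hypothesis $\vc[F]=\mathbf{0}$ means every $\vc[F]_i$ and every $\vc[T]_i$ is zero, so Lemma~\ref{lem:BoundaryTermZero3D} applies and gives $\int_{\Gamma_i}(\vc[u],\vc[f])\,dS_y=0$ for each $i$; the sum over the particle boundaries therefore drops out entirely. For the outer sphere I would send $R\to\infty$: Lemma~\ref{lem:growth3dStokes} provides the decay $|\vc[u]|\lesssim M/R$ and $|\sigma|\lesssim M/R^2$, and its conclusion \eqref{eq:ForceDecayatInfty_chap23D} states precisely that $\int_{\partial B_R(0)}(\vc[u],\vc[f])\,dS_y\to 0$. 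Since the left-hand side is nonnegative and nondecreasing in $R$ while the right-hand side tends to zero, I conclude $\int_E \bigl|\nabla\vc[u]+\nabla\vc[u]^{\mathsf T}\bigr|^2 \, dV = 0$, hence $\nabla\vc[u]+\nabla\vc[u]^{\mathsf T}\equiv \vc[0]$, i.e. $e(\vc[u])\equiv\vc[0]$ throughout $E$.

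Finally I would turn the vanishing strain into the desired conclusion. The exterior of finitely many bounded bodies is connected, and a field with $e(\vc[u])=0$ on a connected open set is an infinitesimal rigid motion: writing the second derivatives of $\vc[u]$ in terms of first derivatives of $e(\vc[u])$ shows $\nabla\vc[u]$ is a constant antisymmetric matrix, so $\vc[u](\vc[x]) = \vc[a] + \vc[b]\times\vc[x]$ for constant vectors $\vc[a],\vc[b]$. No nonzero rigid motion can satisfy the far-field condition \eqref{eq:Infinity}, so $\vc[a]=\vc[b]=\vc[0]$ and $\vc[u]\equiv\vc[0]$ in $E$.

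The bookkeeping --- the normal orientations and the use of $\nabla\cdot\vc[u]=0$ to simplify $\sigma:\nabla\vc[u]$ --- is routine, and the rigid-motion characterization is classical. The only genuinely delicate point is justifying the energy identity on an unbounded domain, i.e. ensuring the outer boundary term really disappears in the limit; this is exactly the role of the decay estimates in Lemma~\ref{lem:growth3dStokes}, so that step, which would otherwise be the main obstacle, is already in hand.
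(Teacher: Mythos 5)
Your proposal is correct and follows essentially the same route as the paper: the energy identity on the truncated domain $E\cap B_R(0)$, annihilation of the particle-boundary terms via Lemma~\ref{lem:BoundaryTermZero3D}, the vanishing of the outer-sphere term via the decay estimates of Lemma~\ref{lem:growth3dStokes}, and the conclusion that $e(\vc[u])\equiv 0$ forces $\vc[u]$ to be a rigid motion, which must vanish by \eqref{eq:Infinity}. The only differences are cosmetic (constant factors in the dissipation integrand and the explicit remark that a strain-free field on a connected set is an infinitesimal rigid motion, which the paper takes for granted).
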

\begin{proof}
Let $\left\langle \cdot,\cdot\right\rangle :\bR^{3\times3}\times\bR^{3\times3}$
be the Frobenius inner product. For large enough $R$,
\begin{align*}
\int_{E\cap B_{R}\left(0\right)}\left\langle e\left(\vc[u]\right),e\left(\vc[u]\right)\right\rangle dV & = 
\int_{E\cap B_{R}\left(0\right)}\left\langle D\vc[u],e\left(\vc[u]\right)\right\rangle dV\\
 & =\int_{\partial(E\cap B_{R}\left(0\right))}\left(\vc[u],e\left(\vc[u]\right)\cdot\mathbf{n}\right) \, dS_{y} 
 -\frac{1}{2}\int_{E\cap B_{R}\left(0\right)}\left(\vc[u],\Delta\vc[u]\right)dV\\
 & =\int_{\partial(E\cap B_{R}\left(0\right))}\left(\vc[u],e\left(\vc[u]\right)\cdot\mathbf{n}\right) \, dS_{y} 
 -\frac{1}{2}\int_{E\cap B_{R}\left(0\right)}\left(\vc[u],\nabla p\right) dV\\
 & =\frac{1}{2}\int_{\partial\left(E\cap B_{R}\left(0\right)\right)}\left(\vc[u],\left(-p\left[\begin{array}{cc}
1 & 0\\
0 & 1
\end{array}\right]+2e\left(\vc[u]\right)\right).\mathbf{n}\right)\, dS_{y} \quad \\
 & =-\frac{1}{2}\sum_{i=1}^{N}\int_{\Gamma_{i}}\left(\vc[u],\vc[f]\right)\, dS_{y}+\frac{1}{2}\int_{\partial B_{R}\left(0\right)}\left(\vc[u],\vc[f]\right) \, dS_{y}\\
 & =\frac{1}{2}\int_{\partial B_{R}\left(0\right)}\left(\vc[u],\vc[f]\right) \, .
\end{align*}
using \eqref{eq:Stokes} and Lemma \ref{lem:BoundaryTermZero3D}.
Taking the limit as $R\to\infty$ in the above expression and using
equation \eqref{eq:ForceDecayatInfty_chap23D},
we get 
\begin{equation}
e\left(\vc[u]\right)\equiv\left[\begin{array}{cc}
0 & 0\\
0 & 0
\end{array}\right] \quad \vc[x] \in E \, .
\end{equation}
Thus, $\vc[u]$ is a rigid body motion. 
Since $\vc[u](\vc[x]) \to\mathbf{0}$
as $\left|\vc[x]\right| \to \infty$, we conclude that $\vc[u]\equiv \mathbf{0}$.
\end{proof}

Our integral equation is based on representing the solution $\vc[u]$ as the sum of two fields: an \emph{incident} field $\vc[u]_{inc}$ that accounts for the net force and torque conditions in \pr{eq:ForceTorque}, and a \emph{scattered} field $\vc[u]_{sc}$ with zero net forces and torques that enforces that $\vc[u]$ is a rigid body motion at each boundary (\pr{eq:RigidBody}). As noted in \cite{rachh15,rachh20152d}, 
if the incident field is defined by uniformly distributed forces and torques, then 
determination of the scattered
field can be interpreted as redistributing those uniformly placed surface forces so as
to eliminate any interior stress.

\subsection{Notation and preliminaries}

Let $\Gamma$ be a smooth surface in $\mathbb{R}^3$, $D^{\mp}$ denote the domains inside and outside $\Gamma$, and $\vc[n]\left(\vc[x]\right)$ the unit outward normal vector at $\vc[x] \in \Gamma$. 
Let $G_{i,j}(\vc[x],\vc[y])$ be the Stokeslet, that is, the fundamental solution to the Stokes equations in free space in $\mathbb{R}^3$, given by: 
\beq G_{i,j}(\vc[x],\vc[y]) = \frac{1}{8\pi} \left( \frac{\delta_{i,j} }{|\vc[x]-\vc[y]|} + \frac{(x_i-y_i)(x_j-y_j)}{|\vc[x]-\vc[y]|^3} \right)  
\, .\eeq
Then the single layer potential operator for the Stokes equation $\cA[S]_{\Gamma}$ is given by 
\beq \cA[S]_{\Gamma}[\vc[\mu]](\vc[x])_i = \int_{\Gamma} G_{i,j}(\vc[x],\vc[y])\mu_{j}(\vc[y]) \, dS_y
\, . \eeq

If we let $\vc[f]$ denote the surface traction corresponding to $\vc[u] = \cA[S]_{\Gamma}[\vc[\mu]]$, then from standard jump relations for the single layer potential, we have: 
\beq f^{\pm}_i(\vc[x]) = \mp \frac{1}{2} \mu_i (\vc[x]) + n_k (\vc[x]) \dashint_{\Gamma} T_{i,j,k}(\vc[x],\vc[y]) \mu_j(\vc[y]) \, dS_y \label{eq:jumptraction} \, , \eeq 
where $\dashint$ is the principal value integral, and $T_{i,j,k}$ is the traction kernel given by: 
\beq T_{i,j,k}(\vc[x],\vc[y]) = -\frac{3}{4\pi} \frac{(x_i-y_i)(x_j-y_j)(x_k-y_k)}{|\vc[x]-\vc[y]|^5}  \eeq

The interior forces and torques are zero, while the exterior ones are equal to the corresponding moments for $\vc[\mu]$: 
\beq \int_{\Gamma_i} \vc[f]^{-}(\vc[y])\, dS_y = 0 \ \ \ \int_{\Gamma_i} (\vc[y] - \vc[x]^c_i) \times \vc[f]^{-}(\vc[y]) \, dS_{y} = 0 \label{eq:Smuint} \eeq
\beq \int_{\Gamma_i} \vc[f]^{+}(\vc[y])\, dS_y = -\int_{\Gamma} \vc[\mu](\vc[y]) \, dS_y \ \ \ \int_{\Gamma_i} (\vc[y] - \vc[x]^c_i) \times \vc[f]^{+}(\vc[y]) \, dS_y 
= -\int_{\Gamma_i} (\vc[y] - \vc[x]^c_i) \times \vc[\mu] (\vc[y]) \, dS_y \label{eq:Smuext} \, . \eeq

Finally, \beq \left|\cA[S]_{\Gamma}[\vc[\mu]] (\vc[x]) \right| \to 0 \ \ \ \text{as} \ \left|\vc[x] \right| \to \infty \, . \label{eq:Sgrowthinf} \eeq

\subsection{The ``incident" and ``scattered" fields}

We first define an incident field $\vc[u]_{inc}$ which is a solution to the Stokes equations in $E$ (Equations (\ref{eq:Stokes}),(\ref{eq:Infinity})) and satisfies the specified net forces and torques on each boundary $\Gamma_i$ (\pr{eq:ForceTorque}). 
Given a force density $\vc[\rho](\vc[x])$ in $\Gamma = \cup_{i=1}^{n} \Gamma_i$, with $\vc[\rho]_i = \vc[\rho] |_{\Gamma_i}$, we let $\vc[u]_{inc} = \cA[S]_{\Gamma}[\vc[\rho]]$. Since the Stokes equations and conditions at infinity are immediately satisfied, according to \pr{eq:Smuext}, we only need to choose a $\vc[\rho]$ such that
\beq \vc[F]_i = \int_{\Gamma_i} \vc[\rho]_{i}(\vc[y]) \, \ dS_y \ \, \ \vc[T]_i = \int_{\Gamma_i} (\vc[y] - \vc[x]^c_i) \times \vc[\rho]_{i}(\vc[y]) \, dS_y \, . \label{Srhoext} \eeq

Letting $|\Gamma_i| = \int_{\Gamma_i} dS_y$ denote the area of the $i$th boundary,
it is easy to check that $\vc[\rho]_{i} = \vc[F]_{i}/|\Gamma_i|$ produces a net force of
$\vc[F]_{i}$ on $\Gamma_i$ with zero torque.
Likewise, letting $\vc[\rho]_{i} =  \vc[\tau]_{i}^{-1} {\vc[T]_{i}} \times (\vc[x]-\vc[x]^c_i)$ produces zero net forces on each boundary but net torque of $\vc[T]_{i}$ on $\Gamma_{i}$ assuming
$\vc[\tau]_i$ is the moment of inertia tensor:
\beq
{\footnotesize
\vc[\tau]_{i} = 
\begin{bmatrix}  \int_{\Gamma_{i}} \left(x_{2} - x^{c}_{i,2} \right)^2 + \left(x_{3} - x^{c}_{i,3} \right)^2 dS & 
-\int_{\Gamma_{i}} \left(x_{1} - x^{c}_{i,1} \right)\left(x_{2} - x^{c}_{i,2} \right) dS 
& -\int_{\Gamma_{i}} \left(x_{1} - x^{c}_{i,1} \right)\left(x_{3} - x^{c}_{i,3} \right) dS  \\ 
-\int_{\Gamma_{i}} \left(x_{1} - x^{c}_{i,1} \right)\left(x_{2} - x^{c}_{i,2} \right) dS &
\int_{\Gamma_{i}} \left(x_{1} - x^{c}_{i,1} \right)^2 + \left(x_{3} - x^{c}_{i,3} \right)^2 dS & 
 -\int_{\Gamma_{i}} \left(x_{2} - x^{c}_{i,2} \right)\left(x_{3} - x^{c}_{i,3} \right) dS  \\
 -\int_{\Gamma_{i}} \left(x_{1} - x^{c}_{i,1} \right)\left(x_{3} - x^{c}_{i,3} \right) dS &
 -\int_{\Gamma_{i}} \left(x_{2} - x^{c}_{i,2} \right)\left(x_{3} - x^{c}_{i,3} \right) dS  &
 \int_{\Gamma_{i}} \left(x_{1} - x^{c}_{i,1} \right)^2 + \left(x_{2} - x^{c}_{i,2} \right)^2 dS \\
\end{bmatrix}
} \, ,\eeq
where $\vc[x] = \left(x_{1} , x_{2}, x_{3} \right)$  and $\vc[x]^c_{i} = \left(x^c_{i,1}, x^c_{i,2}, x^c_{i,3} \right)$.
Thus, 
\beq \vc[\rho]_i = \frac{\vc[F]_i}{|\Gamma_i|} + \vc[\tau]_{i}^{-1} \vc[T]_i \times (\vc[x]-\vc[x]^c_i)  \label{eq:rhoeq} \eeq
is a force density that satisfies both conditions. 

We must now find a field $\vc[u]_{sc}$ that, while producing zero net forces and torques, enforces that the total velocity  $\vc[u] = \vc[u]_{inc}  + \vc[u]_{sc}$ is a rigid body motion on each surface $\Gamma_i$. If we define $\vc[u]_{sc} = \cA[S]_{\Gamma}[\vc[\mu]]$, we must ensure that: 
\beq \vc[0] = \int_{\Gamma_{i}} \vc[\mu]_{i}(\vc[y]) \, dS_y \, , \ \ \vc[0] = \int_{\Gamma_i} (\vc[y] - \vc[x]^c_i) \times \vc[\mu]_{i}(\vc[y]) \, dS_y \label{eq:Smuzero} \, , \eeq
where $\vc[\mu]_{i} = \vc[\mu] |_{\Gamma_i}$. 

\subsection{Formulation as a fluid stress problem}
In order to ensure that $\vc[u]$ is a rigid body motion, we use the fact that rigid bodies cannot have internal stresses, meaning that traction forces from the interior must be identically equal to zero. This sets the interior boundary value problem for the Stokes 
equation to be $\vc[f]^{-} \equiv 0$ on $\Gamma$. Then, applying the jump relations in \pr{eq:jumptraction} to $\vc[u] = \cA[S]_{\Gamma}[\vc[\rho]+\vc[\mu]]$, we obtain
\beq \left(\frac{1}{2} \cA[I]  + \cA[K]\right)[\vc[\mu]](\vc[x]) = -\left(\frac{1}{2} \cA[I]  + \cA[K]\right)[\vc[\rho]](\vc[x]) \ \ \forall \vc[x] \in \Gamma \label{eq:rigidbodymu} \, ,
\eeq 
a Fredholm equation of the second kind for the unknown density $\vc[\mu]$, where
\[
\cA[I] =\left(\begin{array}{cccc}
\cA[I]_{1}\\
 & \cA[I]_{2}\\
 &  & \ddots\\
 &  &  & \cA[I]_{N}
\end{array}\right)\, , \ \ 
\cA[K] =\left(\begin{array}{cccc}
\cA[K]_{1,1} & \cA[K]_{1,2} & \ldots & \cA[K]_{1,N}\\
\cA[K]_{2,1} & \cA[K]_{2,2} & \ldots & \cA[K]_{2,N}\\
\vdots & \vdots & \ddots & \vdots\\
\cA[K]_{N,1} & \cA[K]_{N,2} &  & \cA[K]_{N,N}
\end{array}\right).
\]
Here,
$\cA[I]_{i}: \cA[X]_{i} \to  \cA[X]_{i}$
is the identity map, 
and $\cA[K]_{i,j}:\cA[X]_{j} \to \cA[X]_{i}$ 
is the operator given by
\beq
\left(\cA[K]_{i,j}\vc[\rho] \right)_{k} = n_l (\vc[x]) \int_{\Gamma_{i}} T_{k,l,m}(\vc[x],\vc[y]) \rho_{m}(\vc[y]) \, dS_y \quad \vc[x]\in\Gamma_{i}
\eeq
for $i \neq j$ and 
\beq
\left(\cA[K]_{i,i}\vc[\rho] \right)_{k} = n_l (\vc[x]) \dashint_{\Gamma_{i}} T_{k,l,m}(\vc[x],\vc[y]) \rho_{m}(\vc[y]) \, dS_y \quad \vc[x]\in\Gamma_{i} \, .
\eeq
and $\cA[X]_{i} = C^{0,\alpha} \left(\Gamma_{i}\right) \times C^{0,\alpha} \left(\Gamma_{i}\right) \times C^{0,\alpha} \left(\Gamma_{i}\right)$, where
$C^{0,\alpha} \left(\Gamma_{i}\right)$ is the H\"{o}lder space with exponent $\alpha>0$.
The operator $\frac{1}{2}\cA[I]+\cA[K]$ has a $6n$ dimensional nullspace in 3 dimensions, corresponding precisely to rotations and translations. 
The integral constraints in \pr{eq:Smuzero} determine the unique solution such that the scattered field does not add net forces or torques. 

\begin{thm}
If $\vc[\mu]\left(\vc[x]\right)$ solves equation \eqref{eq:rigidbodymu},
together with the constraints
\eqref{eq:Smuzero} 
then $\vc[u]\left(\vc[x]\right) = \cA[S]_{\Gamma}[\vc[\rho]+\vc[\mu]]$ solves the mobility problem.
\end{thm}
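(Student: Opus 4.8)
The plan is to check that $\vc[u]=\cA[S]_{\Gamma}[\vc[\rho]+\vc[\mu]]$ meets all four requirements \eqref{eq:Stokes}--\eqref{eq:Infinity} of the mobility problem. Two are inherited directly from the single-layer representation: $\vc[u]$ solves the Stokes equations \eqref{eq:Stokes} in $E$ because it is a single-layer potential away from $\Gamma$, and it satisfies the decay condition \eqref{eq:Infinity} by \eqref{eq:Sgrowthinf}. For the net force and torque \eqref{eq:ForceTorque}, I would apply the exterior moment identities \eqref{eq:Smuext} to the combined density $\vc[\rho]+\vc[\mu]$ and split each integral. By the construction of the incident density, \eqref{Srhoext}, the $\vc[\rho]$-part contributes exactly $-\vc[F]_i$ and $-\vc[T]_i$, while the $\vc[\mu]$-part contributes nothing by the constraints \eqref{eq:Smuzero}; this yields $\int_{\Gamma_i}\vc[f]\,dS_y=-\vc[F]_i$ and $\int_{\Gamma_i}(\vc[y]-\vc[x]^c_i)\times\vc[f]\,dS_y=-\vc[T]_i$.

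The real content is the rigid-body condition \eqref{eq:RigidBody}. As established in the derivation preceding the theorem, equation \eqref{eq:rigidbodymu} is, by linearity, equivalent to $\left(\frac{1}{2}\cA[I]+\cA[K]\right)[\vc[\rho]+\vc[\mu]]=\vc[0]$, and by the interior jump relation \eqref{eq:jumptraction} this is precisely the statement that the interior traction $\vc[f]^{-}$ of $\vc[u]$ vanishes identically on $\Gamma$.

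From here I would run an interior energy identity on each bounded body $D_i$, mirroring the computation in Lemma~\ref{Uniqueness mobility3D} but on a bounded domain. Integrating $\langle e(\vc[u]),e(\vc[u])\rangle$ over $D_i$, using $-\Delta \vc[u] + \vc[\nabla] p = \vc[0]$ and $\vc[\nabla]\cdot\vc[u]=0$, and invoking the divergence theorem collapses the volume integral to the single surface term $\frac{1}{2}\int_{\Gamma_i}(\vc[u],\vc[f]^{-})\,dS_y$, with $\vc[n]$ now the outward normal of $D_i$. Since $\vc[f]^{-}\equiv\vc[0]$, this term vanishes and hence $e(\vc[u])\equiv 0$ throughout $D_i$; thus $\vc[u]$ restricted to the interior is a rigid body motion $\vc[v]_i+\vc[\omega]_i\times(\vc[x]-\vc[x]^c_i)$ for some constants $\vc[v]_i,\vc[\omega]_i$. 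Because the Stokes single-layer potential is continuous across $\Gamma$, its one-sided limits agree, so the exterior boundary value of $\vc[u]$ on each $\Gamma_i$ equals this same rigid body field, which is exactly \eqref{eq:RigidBody}; the constants $\vc[v]_i,\vc[\omega]_i$ so produced are the sought translational and rotational velocities.

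I expect the interior rigidity argument to be the only genuine obstacle, the force/torque and far-field conditions being immediate bookkeeping. The care needed is in the energy identity: one must fix the orientation of $\vc[n]$ and the sign of $\vc[f]^{-}$ consistently on $\partial D_i$, and one must know the density $\vc[\rho]+\vc[\mu]\in\cA[X]_i$ is regular enough (H\"older continuous) for the single-layer potential to possess continuous one-sided traction limits obeying \eqref{eq:jumptraction} and for the divergence theorem on $D_i$ to apply. Granting these standard continuity and mapping properties of the Stokes single-layer potential, the argument closes.
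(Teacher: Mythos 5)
Your proposal is correct and follows essentially the same route as the paper: Stokes equations and decay are inherited from the single-layer representation, the force/torque conditions follow from \eqref{eq:Smuext}, \eqref{Srhoext} and \eqref{eq:Smuzero}, and the rigid-body condition comes from observing that \eqref{eq:rigidbodymu} is equivalent to vanishing interior traction, so that $\vc[u]$ is a rigid body motion in each $D_i$ and, by continuity of the single layer potential, on $\Gamma_i$ from the exterior as well. The only difference is that you spell out the interior energy identity proving that zero traction forces a rigid body motion, a step the paper simply asserts as uniqueness for the interior traction problem.
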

\begin{proof}
$\vc[u]\left(\vc[x]\right)$ clearly satisfies the Stokes equations in
$E$ by construction.
Using equations \eqref{eq:Smuint} and  \eqref{eq:Smuext},
the choice of $\vc[\rho]$ in equation \eqref{eq:rhoeq}, and the constraints
\eqref{eq:Smuzero},  
we see that $\vc[u]\left(\vc[x]\right)$ satisfies the net force and 
torque conditions 
\eqref{eq:ForceTorque}.
Furthermore, from \eqref{eq:Sgrowthinf}, it follows that 
$\left|\vc[u]\left(\vc[x]\right)\right|\to0$
as $\left|\vc[x]\right|\to\infty$. 
Since $\vc[u]\left(\vc[x]\right)$ solves the Stokes equations
in $D_{i}$ and satisfies $\vc[f]^{-}\equiv0$
on $\Gamma_{i}$, $\vc[u]$ must be a rigid body motion. 
By the continuity of the single layer potential, 
$\vc[u]$ must define a rigid body motion from the exterior as well.
\end{proof}

A standard approach to find $\vc[\mu]$ would be to discretize this equation as well as the integral constraints, and solve the resulting rectangular linear system. This can be avoided by carefully adding the constraints to the integral equation, as we now show. That is,
instead of (\ref{eq:rigidbodymu}), we consider the equation
\beq \frac{1}{2} \vc[\mu](\vc[x]) + \cA[K][\vc[\mu]](\vc[x]) 
+ \int_{\Gamma_i} \vc[\mu](\vc[y]) dS_y +  \left(\int_{\Gamma_i} (\vc[y]-\vc[x]^c_i) \times \vc[\mu](\vc[y]) dS_y\right) \times (\vc[x]-\vc[x]^c_i) = 
-\frac{1}{2} \vc[\rho](\vc[x]) - \cA[K][\vc[\rho]](\vc[x]) \ \ \forall \vc[x] \in \Gamma_i  \label{eq:augsysmu} \eeq  
or
\beq \left(\frac{1}{2}\cA[I]+\cA[K+L] \right) \vc[\mu] = -\left(\frac{1}{2}\cA[I]+\cA[K] \right)\vc[\rho] \label{eq:augsysop} \eeq
where
\beq
\cA[L] =\left(\begin{array}{cccc}
\cA[L]_{1}\\
 & \cA[L]_{2}\\
 &  & \ddots\\
 &  &  & \cA[L]_{N}
\end{array}\right) \, ,
\eeq
with $\cA[L]_{i}[\vc[\mu]_{i}](\vc[x]) = 
  \int_{\Gamma_i} \vc[\mu]_{i}(\vc[y]) \ dS_y +
\left( \int_{\Gamma_{i}} (\vc[y]-\vc[x]^c_i) \times \vc[\mu]_{i}(\vc[y]) dS_y \right) \times (\vc[x]-\vc[x]^c_i) $. 

The following lemma shows that solving \eqref{eq:augsysop} is equivalent to solving \eqref{eq:rigidbodymu}
with the constraints \eqref{eq:Smuzero}.

\begin{lem}
 If $\vc[\mu]$ solves \eqref{eq:augsysop}, then it solves \eqref{eq:rigidbodymu} and \eqref{eq:Smuzero}.
\end{lem}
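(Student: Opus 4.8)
The plan is to read the block operator $\cA[L]$ as a pair of moment functionals and to test \eqref{eq:augsysop} against those same functionals, exploiting the fact that its right-hand side is an interior traction. Fix a component $\Gamma_i$ and abbreviate the two moments of $\vc[\mu]$ on it by
\[
\vc[a]_i = \int_{\Gamma_i} \vc[\mu]_i(\vc[y])\, dS_y, \qquad
\vc[b]_i = \int_{\Gamma_i} (\vc[y]-\vc[x]^c_i)\times \vc[\mu]_i(\vc[y])\, dS_y,
\]
so that $\cA[L]_i[\vc[\mu]_i](\vc[x]) = \vc[a]_i + \vc[b]_i\times(\vc[x]-\vc[x]^c_i)$ and the constraints \eqref{eq:Smuzero} read exactly $\vc[a]_i=\vc[b]_i=\vc[0]$. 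Once these are established, $\cA[L][\vc[\mu]]\equiv\vc[0]$ and \eqref{eq:augsysop} collapses to \eqref{eq:rigidbodymu}; so the entire lemma reduces to showing that a solution of \eqref{eq:augsysop} forces $\vc[a]_i=\vc[b]_i=\vc[0]$ for every $i$.

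First I would recall, from the jump relation \eqref{eq:jumptraction}, that $\tfrac12\cA[I]+\cA[K]$ is precisely the operator sending a density to the \emph{interior} traction $\vc[f]^{-}$ of the corresponding single-layer potential. Writing $\vc[g] = (\tfrac12\cA[I]+\cA[K])[\vc[\mu]+\vc[\rho]]$ for the interior traction of $\cA[S]_\Gamma[\vc[\mu]+\vc[\rho]]$ and using linearity, equation \eqref{eq:augsysop} restricted to $\Gamma_i$ becomes $\cA[L]_i[\vc[\mu]_i] = -\vc[g]$ on $\Gamma_i$. The key point is then \eqref{eq:Smuint}: interior tractions of single-layer potentials carry zero net force and zero net torque on each boundary, so $\int_{\Gamma_i}\vc[g]\, dS_y=\vc[0]$ and $\int_{\Gamma_i}(\vc[y]-\vc[x]^c_i)\times\vc[g]\, dS_y=\vc[0]$.

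Next I would take the zeroth and first moments over $\Gamma_i$ of the identity $\cA[L]_i[\vc[\mu]_i]=-\vc[g]$. Because $\vc[x]^c_i$ is the centroid of $\Gamma_i$ (the vanishing of $\int_{\Gamma_i}(\vc[x]-\vc[x]^c_i)\, dS$ is exactly the earlier statement that the torque part of $\vc[\rho]$ carries no net force), the zeroth moment of the left side equals $|\Gamma_i|\,\vc[a]_i$ while the right side vanishes; since $|\Gamma_i|>0$ this gives $\vc[a]_i=\vc[0]$. For the first moment the contribution of $\vc[a]_i$ drops out (again by the centroid identity, or because $\vc[a]_i=\vc[0]$), and the triple-product identity $\vc[r]\times(\vc[b]_i\times\vc[r]) = |\vc[r]|^2\vc[b]_i - (\vc[r]\cdot\vc[b]_i)\,\vc[r]$ with $\vc[r]=\vc[x]-\vc[x]^c_i$ turns the remaining integral into $\left(\int_{\Gamma_i}(|\vc[r]|^2\,\mathrm{Id}-\vc[r]\,\vc[r]^{\top})\, dS\right)\vc[b]_i$, which is precisely the moment-of-inertia tensor $\vc[\tau]_i$ acting on $\vc[b]_i$. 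Equating to the vanishing first moment of $\vc[g]$ gives $\vc[\tau]_i\vc[b]_i=\vc[0]$; since $\vc[\tau]_i$ is symmetric positive definite — for any unit vector $\vc[e]$ one has $\vc[e]^{\top}\vc[\tau]_i\vc[e]=\int_{\Gamma_i}|\vc[r]\times\vc[e]|^2\, dS>0$ for a genuine closed surface — it is invertible, and hence $\vc[b]_i=\vc[0]$.

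The main obstacle is not any individual computation but spotting the correct pairing: the inhomogeneous right-hand side of \eqref{eq:augsysop} is an interior traction, so testing it against the two rigid-motion moments annihilates it by \eqref{eq:Smuint}, whereas the same test applied to the appended operator $\cA[L]_i$ reproduces the two invertible maps $\vc[a]_i\mapsto|\Gamma_i|\,\vc[a]_i$ and $\vc[b]_i\mapsto\vc[\tau]_i\vc[b]_i$. The two structural facts that make the argument close — the vanishing of $\int_{\Gamma_i}(\vc[x]-\vc[x]^c_i)\, dS$, which decouples the zeroth and first moments, and the positive definiteness of $\vc[\tau]_i$, which inverts the torque equation — are exactly what force $\vc[a]_i=\vc[b]_i=\vc[0]$. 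With both constraints \eqref{eq:Smuzero} in hand, $\cA[L][\vc[\mu]]$ vanishes identically and \eqref{eq:augsysop} reduces to \eqref{eq:rigidbodymu}, which is the claim.
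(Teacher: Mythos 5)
Your argument is correct and is essentially the paper's own proof: the paper packages the zeroth and first moments into an operator $\cA[G]$ with $\cA[L]=\cA[B]\cA[G]$ and $\cA[G](\tfrac12\cA[I]+\cA[K])=0$, applies $\cA[G]$ to both sides of \eqref{eq:augsysop}, and reads off $|\Gamma_i|\vc[a]_i=\vc[0]$ and $\vc[\tau]_i\vc[b]_i=\vc[0]$ exactly as you do. Your write-up merely makes explicit two facts the paper leaves implicit (the centroid identity that decouples the two moments, and the positive definiteness of $\vc[\tau]_i$), which is a welcome clarification but not a different proof.
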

\begin{proof}
We notice $\cA[L]_i$ can be written as the product of two operators 
$\cA[L]_i = \cA[B]_i \cA[G]_i$, where $(\vc[F],\vc[T]) = \cA[G]_i [\vc[\mu]_i]$ 
computes net force and torque associated with 
$\cA[S]_{\Gamma_{i}}[\vc[\mu]_i]$, and $\cA[B]_i(\vc[F],\vc[T])$ is the function $\vc[F] +  \vc[T] \times (\vc[x]-\vc[x]^c_i)$ on $\Gamma_i$. 
Let $\cA[G]:\prod_{i}^{n} \cA[X]_{i} \to \mathbb{R}^{6n}$ be the operator given by $\cA[G] \vc[\mu] = \left(\cA[G]_{1} \vc[\mu]_{1},\ldots,\cA[G]_{n} \vc[\mu]_{n}\right)$.
Using properties of the traction kernel, one can show that $\cA[G] (\frac{1}{2}\cA[I]+\cA[K]) = 0$. This means, applying $\cA[G]$ on both sides of \pr{eq:augsysop}

\beq \cA[G] \cA[L] \vc[\mu] = 0 \eeq  

By the same reasoning we used to construct the incident field (\pr{eq:rhoeq}), $\vc[\rho] = \cA[L] \vc[\mu]$ 
is a density such that $\cA[S]\vc[\rho]$ has net force $|\Gamma_i| \int_{\Gamma_i} \vc[\mu]_i(\vc[y]) dS_y$ 
and net torque $\vc[\tau]_{i} \int_{\Gamma_i} (\vc[y]-\vc[x]^c_i) \times \vc[\mu]_{i}(\vc[y]) dS_y$ on $\Gamma_i$. Thus,
{\small \beq  \cA[G] \cA[L] \vc[\mu] = \left(|\Gamma_1| \int_{\Gamma_1} \vc[\mu]_1(\vc[y]) dS_y, \vc[\tau]_{1} \int_{\Gamma_1} (\vc[y]-\vc[x]^c_1) \times \vc[\mu]_{1}(\vc[y]) dS_y 
\ldots, |\Gamma_n| \int_{\Gamma_n} \vc[\mu]_n(\vc[y]) dS_y, \vc[\tau]_{n} \int_{\Gamma_n} (\vc[y]-\vc[x]^c_n) \times \vc[\mu]_{n}(\vc[y]) dS_y \right)\eeq}

Hence, this means $\cA[G]_{i} \vc[\mu]_{i} = 0$ (the integral constraints are satisfied), and $\cA[L] \vc[\mu] = 0$, which implies \pr{eq:rigidbodymu} is satisfied. 
\end{proof}

The following lemma shows that the operator $\frac{1}{2}\cA[I]+\cA[K]+\cA[L]$ has no null space.
\begin{lem}
 The operator $\frac{1}{2}\cA[I]+\cA[K]+\cA[L]$ is injective.
\end{lem}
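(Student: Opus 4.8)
The plan is to show that any $\vc[\mu]$ with $(\frac{1}{2}\cA[I]+\cA[K]+\cA[L])\vc[\mu] = \vc[0]$ must vanish, by first stripping off the constraint operator $\cA[L]$ exactly as in the previous lemma, and then combining an interior energy identity, the continuity of the single-layer potential, and the exterior uniqueness result of Lemma~\ref{Uniqueness mobility3D}. First I would apply $\cA[G]$ to both sides of the homogeneous equation. Since $\cA[G](\frac{1}{2}\cA[I]+\cA[K]) = 0$, this leaves $\cA[G]\cA[L]\vc[\mu] = \vc[0]$, whose $i$-th block is $\left(|\Gamma_i|\int_{\Gamma_i}\vc[\mu]_i\,dS_y,\ \vc[\tau]_i\int_{\Gamma_i}(\vc[y]-\vc[x]^c_i)\times\vc[\mu]_i\,dS_y\right)$. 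Because $|\Gamma_i| > 0$ and the moment of inertia tensor $\vc[\tau]_i$ is symmetric positive definite, hence invertible, this forces the constraints \eqref{eq:Smuzero}; consequently $\cA[L]\vc[\mu] = \vc[0]$ and therefore $(\frac{1}{2}\cA[I]+\cA[K])\vc[\mu] = \vc[0]$.

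Next, I would set $\vc[u] = \cA[S]_\Gamma[\vc[\mu]]$. The identity $(\frac{1}{2}\cA[I]+\cA[K])\vc[\mu] = \vc[0]$ says precisely that the interior traction $\vc[f]^- \equiv \vc[0]$ on every $\Gamma_i$. I would then repeat the energy identity used in Lemma~\ref{Uniqueness mobility3D}, but on each bounded interior domain $D_i$: since $\int_{D_i}\left\langle e(\vc[u]),e(\vc[u])\right\rangle dV = \frac{1}{2}\int_{\Gamma_i}(\vc[u],\vc[f]^-)\,dS_y = 0$, the strain vanishes in $D_i$ and $\vc[u]$ is a rigid body motion there. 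In particular the interior trace of $\vc[u]$ on each $\Gamma_i$ has the form $\vc[v]_i + \vc[\omega]_i\times(\vc[x]-\vc[x]^c_i)$, and by the continuity of the single-layer potential across $\Gamma$ the exterior trace of $\vc[u]$ equals the same rigid body motion.

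With this boundary information I would check that $\vc[u]$, restricted to $E$, satisfies every hypothesis of Lemma~\ref{Uniqueness mobility3D}: it solves the Stokes equations in $E$ by construction, decays at infinity by \eqref{eq:Sgrowthinf}, attains a rigid body motion on each $\Gamma_i$ by the continuity argument, and has vanishing net exterior force and torque by \eqref{eq:Smuext} together with the constraints \eqref{eq:Smuzero} established above. The lemma then yields $\vc[u]\equiv\vc[0]$ in $E$, so the exterior traction also vanishes, $\vc[f]^+\equiv\vc[0]$. Finally, subtracting the two relations in \eqref{eq:jumptraction} gives $\vc[\mu] = \vc[f]^- - \vc[f]^+ = \vc[0]$, which proves injectivity.

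I expect the crux to be the transfer of boundary data from the interior to the exterior: the exterior uniqueness result requires a rigid-body boundary condition that is not available a priori for the scattered field, and must instead be manufactured from the vanishing interior traction via the interior energy argument and the continuity of $\cA[S]_\Gamma$ across $\Gamma$. The only other point demanding care is confirming that $\vc[\tau]_i$ is genuinely invertible, i.e. that the moment of inertia tensor of each genus-zero surface is positive definite (which follows since $\vc[a]^{T}\vc[\tau]_i\vc[a] = \int_{\Gamma_i}|\vc[a]\times(\vc[x]-\vc[x]^c_i)|^2\,dS_y > 0$ for any nonzero $\vc[a]$), so that the constraint-extraction step in the first paragraph is justified.
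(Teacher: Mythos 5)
Your argument is correct and follows essentially the same route as the paper's proof: apply $\cA[G]$ to strip off $\cA[L]$ and recover the constraints \eqref{eq:Smuzero}, deduce $\vc[f]^-\equiv\vc[0]$, conclude that $\vc[u]=\cA[S]_\Gamma[\vc[\mu]]$ is a rigid body motion on each $\Gamma_i$, invoke the exterior uniqueness lemma to get $\vc[u]\equiv\vc[0]$ in $E$, and finish with $\vc[\mu]=\vc[f]^--\vc[f]^+$. The only differences are that you spell out two steps the paper leaves implicit --- the interior energy identity behind ``uniqueness of the interior traction problem'' and the positive definiteness of $\vc[\tau]_i$ --- both of which are correct.
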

\begin{proof}
 Let $\vc[\mu] \in \mathcal{N}\left(\frac{1}{2}\cA[I] + \cA[K] +\cA[L] \right)$, i.e. it solves
 $\left(\frac{1}{2}\cA[I]+\cA[K]+\cA[L]\right)\vc[\mu] = 0$. 
 Following the reasoning above,
 we conclude that $\vc[\mu]$ satisfies the force and torque constraints given by equation 
 \eqref{eq:ForceTorque}.
 Thus $\cA[L]\vc[\mu] = 0$ and $\left(\frac{1}{2}\cA[I] + \cA[K]\right)\vc[\mu] = 0$. 
 Let $\vc[u]=\cA[S]_{\Gamma}[\vc[\mu]](\vc[x])$.  
 Let $\vc[f]^{-}$ and $\vc[f]^{+}$ denote the interior 
 and exterior limits of the surface traction corresponding to 
 the velocity field $\vc[u]$, respectively. 
 From the properties of the Stokes single layer potential 
 \begin{equation}
 \vc[f]^{-} = \left(\frac{1}{2}\cA[I]+\cA[K] \right)\vc[\mu] = 0 \, .
 \end{equation}
 By uniqueness of solutions to interior surface traction problem, 
 we conclude that $\vc[u]$ is a rigid body motion on each boundary
 component. Thus, $\vc[u]$ solves the mobility problem with $\vc[F]_{i} = \mathbf{0}$ and $\vc[T]_{i}=0$.  
 By uniqueness of
 solutions to the mobility problem, we conclude that $\vc[u]\equiv \mathbf{0}$ in $E$. Hence, 
 $\vc[f]^{+} =0$. From the properties of the Stokes single layer,
 \begin{equation}
  \vc[\mu] = \vc[f]^{-} - \vc[f]^{+}= \mathbf{0} \, .
 \end{equation}
 Therefore, $\mathcal{N} \left(\frac{1}{2}\cA[I] + \cA[K] + \cA[L]\right) = \left \{ 0 \right \}$.
\end{proof}
By the Fredholm alternative, therefore, \eqref{eq:augsysop} has a unique
solution $\vc[\mu]$.

\section{Numerical method}
\label{sec:discretization}
Given $\{\Gamma_i, \vc[F]_i, \vc[T]_i \}_{i=1}^n$, a set of particle boundaries, external forces and torques acting on them respectively, our integral equation formulation to compute the resultant velocity field at any point in the fluid domain or on the particle boundaries can be summarized from the previous section as:
\begin{enumerate}
\item Evaluate the densities $\{\rho_i\}_{i=1}^n$ using \eqref{eq:rhoeq},
\item Solve the system of integral equations \eqref{eq:augsysop} for the unknown densities $\{\mu_i\}_{i=1}^n$.
\item At any $\vc[x] \in \Gamma \cup E$, evaluate the velocity as $\vc[u](\vc[x]) = \cA[S]_{\Gamma}[\vc[\rho] + \vc[\mu]](\vc[x])$. 
\end{enumerate}
We describe our numerical method for discretizing the integral operators and evolving the boundary positions in this section.    

We use spherical harmonic approximations \cite{boyd2001chebyshev} to represent the particle boundaries and the force densities defined on them. The coordinate functions $\vc[x](\theta, \phi)$---where   $\theta$ is the polar angle and $\phi$ is the azimuthal angle---of a particle boundary, for instance, are approximated by their truncated spherical harmonic expansion of degree $p$: 
\begin{equation}
 \vc[x] (\theta,\phi) = 
\sum_{n=0}^p\sum_{m=-n}^n \vc[x]_n^m \, Y_n^m(\theta,\phi), \quad  \theta \in [0,\pi],  \quad \phi \in [0,2\pi].
\label{shesimple_trunc}
\end{equation}
Here, $Y_n^m$ is a spherical harmonic of degree $n$ and order $m$ defined in terms of the associated Legendre functions $P_n^m$
by
\begin{equation}
Y_n^m(\theta,\phi) =
\sqrt{\frac{2n+1}{4\pi}} \,
\sqrt{\frac{(n-|m|)!}{(n+|m|)!}} \, P_n^{|m|}(\cos \theta)\,
		      e^{i m \phi}, \label{ynmpnm}
\end{equation}
and each $\vc[x]_n^m$, a $3\times 1$ vector, is a spherical harmonic coefficient of $\vc[x]$. The finite-term spherical harmonic approximations, such as (\ref{shesimple_trunc}), are spectrally convergent with $p$ for smooth functions \cite{orszag1974fourier}.  
The forward and inverse spherical harmonic transforms \cite{mohlenkamp1999fast} can be used to switch from physical to spectral domain and vice-versa. A standard choice for the numerical integration scheme required for computing these transforms is to use the trapezoidal rule in the azimuthal direction and the Gauss-Legendre quadrature in the polar direction. The resulting grid points in the parametric domain are given by
\beq
\left\{\theta_j = \cos^{-1}(t_j), \, j = 0, \ldots p \right\}, \quad  \text{and} \quad
 \left\{\phi_k =\frac{2 \pi k}{2p+2}, \, k = 0, \ldots, 2p+1 \right\},
 \label{eqn:nodes}\eeq 
where $t_j$'s are the nodes of the $(p+1)$-point Gauss-Legendre quadrature on $[-1, 1]$. Then, the following quadrature rule for smooth integrands is spectrally convergent (as shown in \cite{veerapaneni2011fast}):
\begin{align}
\int_{\Gamma_i} \vc[\mu](\vc[y]) dS_y &= \int_0^{2\pi} \int_0^\pi \vc[\mu](\vc[y](\theta, \phi)) W(\theta, \phi) \, d\theta d\phi, \\[0.5ex]
&=  \sum_{j = 0}^{p} \sum_{k=0}^{2p+1} \frac{2\pi \lambda_j}{(2p+2) \sin \theta_j} 
\vc[\mu](\vc[y](\theta_j, \phi_k)) W(\theta_j, \phi_k), 
\end{align}
where $\lambda_j$'s are the Gauss-Legendre quadrature weights. The area element $W$ is calculated using the standard formula
\begin{equation}
W = \sqrt{(\vc[x]_\theta \cdot \vc[x]_\theta)(\vc[x]_\phi \cdot \vc[x]_\phi) - (\vc[x]_\theta \cdot \vc[x]_\phi)^2},
\label{forms}
\end{equation}
and the derivatives of the coordinate functions $\vc[x]$ are computed via spectral differentiation, that is, 
\begin{equation}
\vc[x]_\theta(\theta_j, \phi_k) = 
\sum_{n=0}^p\sum_{m=-n}^n \vc[x]_n^m \, (Y_n^m(\theta_j,\phi_k))_\theta,
\quad
\vc[x]_\phi(\theta_j, \phi_k) = 
\sum_{n=0}^p\sum_{m=-n}^n \vc[x]_n^m \, (Y_n^m(\theta_j,\phi_k))_\phi.
\label{diff}
\end{equation}
The off-boundary evaluation of all the layer potentials is computed using this smooth quadrature rule. If a target point lies close to the boundary, the layer potentials become {\em nearly-singular} and specialized quadrature rules are required to attain uniform convergence. While several recent works devised high-order schemes for near singular integrals in two dimensions \cite{beale2001method,quaife2014high,helsing2008evaluation,ojala2015accurate,barnett2015spectrally,klockner2013QBX},  limited work exists for three-dimensional problems \cite{ying2006high,af2014fast,tlupova2013nearly}. In this work, we simply upsample the boundary data to evaluate layer potentials at targets that are close. 

Lastly, the fast spherical grid rotation algorithm introduced in \cite{gimbutas2013fast} is used to compute the weakly-singular integral operators, such as $\cA[K]_{i,i}[\cdot]$. We refer to Theorem 1 of \cite{gimbutas2013fast} for the quadrature rule, which is based on the idea that if the spherical grid is rotated so that the target point becomes the north (or south) pole, the integrand transforms into a smooth function. This scheme is spectrally accurate and has a compulational complexity of $\mathcal{O}(p^4 \log p)$. 

\subsection{Fast evaluation of integral operators}
The two integral kernels we must evaluate at each step are $\cA[S]_{\Gamma}$ and $\cA[K]_{\Gamma}$. After discretizing them, their block-diagonal components involve singular integrals in each $\Gamma_i$, which we can compute using the method of \cite{gimbutas2013fast}. However, instead of computing them at every time step, we notice that both kernels are translation invariant, and that although not rotation invariant, for both it is true that if $\Gamma = R \Gamma^0$, where $R$ is a rotation, then 
\beq \cA[S]_{\Gamma} \vc[\mu] = R \cA[S]_{\Gamma^0} R^{*} \vc[\mu]. \eeq
It is thus possible to compute diagonal blocks using the fast singular quadrature only once, and update them at each time step using the corresponding rotation matrices $R_i(t)$. 

Interactions between different surfaces must of course be evaluated at each time step. When the system size $N = 6p(p+1)n$ is large, we use the Stokes 3D FMM library, \emph{STKFMMLIB3D} \cite{toolboxfmm}, to accelerate this evaluation. In order to compute the traction kernel $\cA[K]_{\Gamma}$, we turn on the \texttt{ifgrad} flag in the Stokes particle FMM routine of this library to compute both the pressure $p$ and the gradient of the single layer potential, $\vc[u] = \cA[S]_{\Gamma} \vc[\rho]$. The application of the traction kernel is then computed at target points as 
\begin{equation}
\cA[K]_{\Gamma}\vc[\rho] = [-p I + \nabla \vc[u] + \nabla \vc[u]^{T} ] \vc[n]. 
\end{equation}

\subsection{Fast solution of integral equations}

For large system size $N$, we employ the fast evaluation scheme and the Krylov subspace iterative method GMRES \cite{GMRES} to solve for the scattered field force density $\vc[\mu]$ in \eqref{eq:augsysop}. This system of integral equations is of second kind, and as such is generally well-conditioned. The number of GMRES iterations is dependent on the system matrix eigenspectrum, and as shown in Section \ref{sec:numerical}, this number typically varies with geometric complexity of $\Gamma$ and the distance between surfaces (increases as surfaces come close to touching). 

For the examples presented in this work, we accelerate this iterative method by using a simple block-diagonal preconditioner, corresponding to the inverse of self-interactions for each surface $\Gamma_i$. The block-diagonal components for the preconditioner are also computed only once, and updated using the corresponding rotation matrices $R(t)$. For cases in which a more robust preconditioner is warranted, sparse approximate inverse (SPAI) \cite{vavasis1992preconditioning} or multi-level hierarchical factorization methods \cite{bebendorf2005hierarchical,quaife2015preconditioners,ho2013hierarchical_ie,coulier2015inverse,CMZ14,corona2015tensor} are recommended.  

\subsection{Time-stepping scheme} Since $\Gamma_i$ is rigid, we can represent the position of a point $\vc[X](t) \in \Gamma_i^t$ at time $t$ as 
\beq \vc[X](t) = \vc[X]^c_i(t) + R_i(t) (\vc[X](0) - \vc[X]^c_i(0)), \eeq
where $\vc[X]^c_i(t)$ is the centroid of $\Gamma_i^t$ and $R_i(t)$ is a rotation matrix. Although it is possible to use $\vc[u]$ to evolve $\vc[X]$ directly, we want to avoid any distortion of the shape of $\Gamma_i$. Instead, we compute $\vc[v]_i$ and $\vc[\omega]_i$, and evolve $\vc[X]^c_i(t)$ and $R_i(t)$.

To find $\vc[v]_i$, we compute the average of $\vc[u]$ on $\Gamma_i$: 
\beq \vc[v]_i = \frac{1}{|\Gamma_i|} \int_{\Gamma_i} \vc[u](y) dS_y. \eeq
We then set $\vc[\omega]_i \times (\vc[x] - \vc[x]^c_i) = \vc[u]-\vc[v]_i$ and solve for $\vc[\omega]_i$ as
\beq \vc[\omega]_i = \vc[\tau]_{i}^{-1} \int_{\Gamma_i} (\vc[y] - \vc[x]_i^c) \times \vc[u](\vc[y]) dS_y \, .\eeq  
The centroid positions and the rotation matrices are then evolved using the equations
\beq \vc[\dot{X}]^c_i(t) = \vc[v]_i(t,\vc[X](t)) \ \ \ \ \ \dot{R}_i(t) \vc[z] = \vc[\omega]_i(t,\vc[X](t)) \times \vc[z] = M({\vc[\omega]_i(t,\vc[X](t))})\vc[z], \label{eq:RvODEs} \eeq
\beqn \text{where} \qquad M(\vc[\omega]_i) = \bbm 0 & -\omega_3 & \omega_2 \\ \omega_3 & 0 & -\omega_1 \\ -\omega_2 & \omega_1 & 0\ebm \hspace{-0.05in}. \eeqn

For constant $\vc[\omega]$, the solution to the rotation matrix in \pr{eq:RvODEs} is $R(t) = e^{Mt}$, which we can evaluate as $I + \sin t M + (1-\cos t)M^2$ using the Rodrigues rotation formula. In general, the solution can be written as $R(t) = \mathrm{exp}(\int_0^t M(\vc[\omega](t)) dt)$. 
We then discretize the system \eqref{eq:RvODEs} in time using an explicit Runge-Kutta method and evolve centroids and rotation matrices accordingly. We note that, in order to preserve orthogonality over time, it is sometimes desirable to implement an equivalent integration for the rotation matrix $R(t)$ using a corresponding unit quaternion $q(t)$.  

\subsection{Contact force algorithm}
In the evolution of rigid bodies in fluid flow, it is crucial to avoid collisions and overlap between surfaces. We employ the contact algorithm described in \cite{das2013ehd}: for each pair of surfaces $\Gamma_1,\Gamma_2$ touching at a point $x^*$, we introduce contact forces $\vc[F]_1,\vc[F]_2$ such that $\vc[F]_1 = -\vc[F]_2$ and the corresponding velocities satisfy a no-slip condition at $x^*$: 

\begin{equation}
\vc[v]_1 + \vc[\omega]_1 \times (x^* - x_1^c) = \vc[v]_2 + \vc[\omega]_2 \times (x^* - x_2^c).
\end{equation}
Let $\vc[F]_c$ be an array containing the $n_c$ contact forces (only one force per pair is needed, since forces applied to each surface are opposite). Given a configuration of rigid bodies with $n_c$ pairs in contact, let $C$ be the sparse, $N \times 3 n_c$ array such that the corresponding array of forces and torques applied to $\Gamma$ is $\vc[F] = C \vc[F]_c$. Let $D$ then be the $3 n_c \times N$ array such that, given translational and rotational velocities in $\vc[V]$, $D \vc[V]$ computes the array of differences between velocities at contact points. 

Given an initial set of rigid body velocities $\vc[V]_0$, the additional contact forces needed to enforce no-slip conditions at contact points satisfy
\begin{equation}
(DMC) \vc[F]_c = -D \vc[V]_0, 
\label{eq:eq_contact_force}
\end{equation}
where $M$ is the mobility matrix. Our contact algorithm proceeds by constructing the matrix $DMC$ (corresponding to solving $3 n_c$ instances of the mobility problem) and solving for $\vc[F]_c$. The corresponding updates incorporating the effect of contact forces on $\vc[\mu]$, $\vc[\sigma]$, $\vc[u]$ and $\vc[V]$ are then computed. 

\subsection{Summary of the method}

Given an initial configuration $\Gamma^0$ and prescribed forces and torques $\vc[F]_i(t)$, $\vc[T]_i(t)$, we want to find $\vc[\rho](\vc[x],t)$, $\vc[\mu](\vc[x],t)$, the centroids $\vc[X]^c_i(t)$ and rotation matrices $R_i(t)$ at times $t_k = k \Delta t$ for $k = 1,\dots,m$.  

\begin{algorithm}[H]
\begin{center}
\begin{algorithmic}[1]
\vspace{0.1in}
\STATE Evaluate single layer $\cA[S]_{\Gamma^0}$ and traction $\cA[K]_{\Gamma^0}$ kernels. 
\FOR{ $k = 0 : m$}
\STATE Compute incident field density:~\label{lin:inc} \beqn \vc[\rho]^k_i(\vc[x]) = \frac{\vc[F]_i(t_k)}{|\Gamma_i|} + \vc[\tau]_{i}^{-1}\vc[T]_i(t_k)\times (\vc[x]-\vc[x]^c_i) \eeqn
\STATE Solve for scattered field density~\label{lin:solve} \eqref{eq:augsysop}:  \beqn \left(\frac{1}{2}\cA[I]+\cA[K]+\cA[L] \right) \vc[\mu]^k = -\left(\frac{1}{2}\cA[I]+\cA[K] \right)\vc[\rho]^k \eeqn
\STATE Evaluate $\vc[u] = \cA[S]_{\Gamma^k} [ \vc[\rho]^k + \vc[\mu]^k ]$ and compute rigid body velocities $(\vc[v]_i(t) ,\vc[\omega]_i (t) )$.~\label{lin:eval} 
\IF{Contact occurs}
\STATE Compute contact force $\vc[F]_c$ satisfying \eqref{eq:eq_contact_force}.~\label{lin:contact} 
\STATE Update $\vc[\rho]^k$ and $\vc[\mu]^k$, evaluate $\vc[u]$ and update $(\vc[v]_i(t) ,\vc[\omega]_i (t) )$
\ENDIF
\STATE Evolve centroids and rotation matrices to obtain $\vc[X]^c_i(t_{k+1})$, $R_i(t_{k+1})$
\STATE Update diagonal blocks of $\cA[S]_{\Gamma^k}$ and $\cA[K]$.~\label{lin:opdiag} 
\STATE Evaluate off-diagonal interactions.~\label{lin:opoffd} 
\ENDFOR 
\end{algorithmic}
\end{center}
\caption{Rigid body Stokes evolution}
\label{alg:RBS}
\end{algorithm}

\section{Numerical experiments}
\label{sec:numerical}

We present three representative applications of our boundary integral equation method, specifying in each case the context for the mobility problem formulation. Forces and torques applied to each particle surface $\Gamma_i$ are introduced via the incident surface force density $\rho_i$. We then perform a series of experiments to test performance and scaling of the proposed solver. 

\subsection{Experimental setup}

\subsubsection{Sedimentation of particles under gravity}
Suspension of rigid particles sedimenting under the action of gravity in viscous flows are commonly observed in many natural and engineering systems. The particle dynamics can be significantly complex even for simpler problem setups, especially for non-spherical particles (see \cite{af2014fast} and references therein). This problem can simply be viewed as an instance of the mobility problem with external forces and torques given by, 
\beq \vc[F]_{i} = \Delta m_i \vc[g], \quad \vc[T]_i = \vc[0], \quad i =1, \ldots, n, \label{eq:gravity} \eeq
where $\vc[g]$ is the gravity vector and $\Delta m_i$ is the difference between the mass of the $i^{\text{th}}$ particle and the surrounding fluid of same volume. Therefore, we can use the formulation developed in this work and, from \eqref{eq:gravity}, evaluate the scattered and incident field densities. Using the numerical algorithm discussed in Section \ref{sec:discretization}, we solved for the evolution of various particle configurations under gravity; Figure \ref{fig:gravity} shows two examples. 

\begin{figure}[H]
\centering
\includegraphics[width=\textwidth]{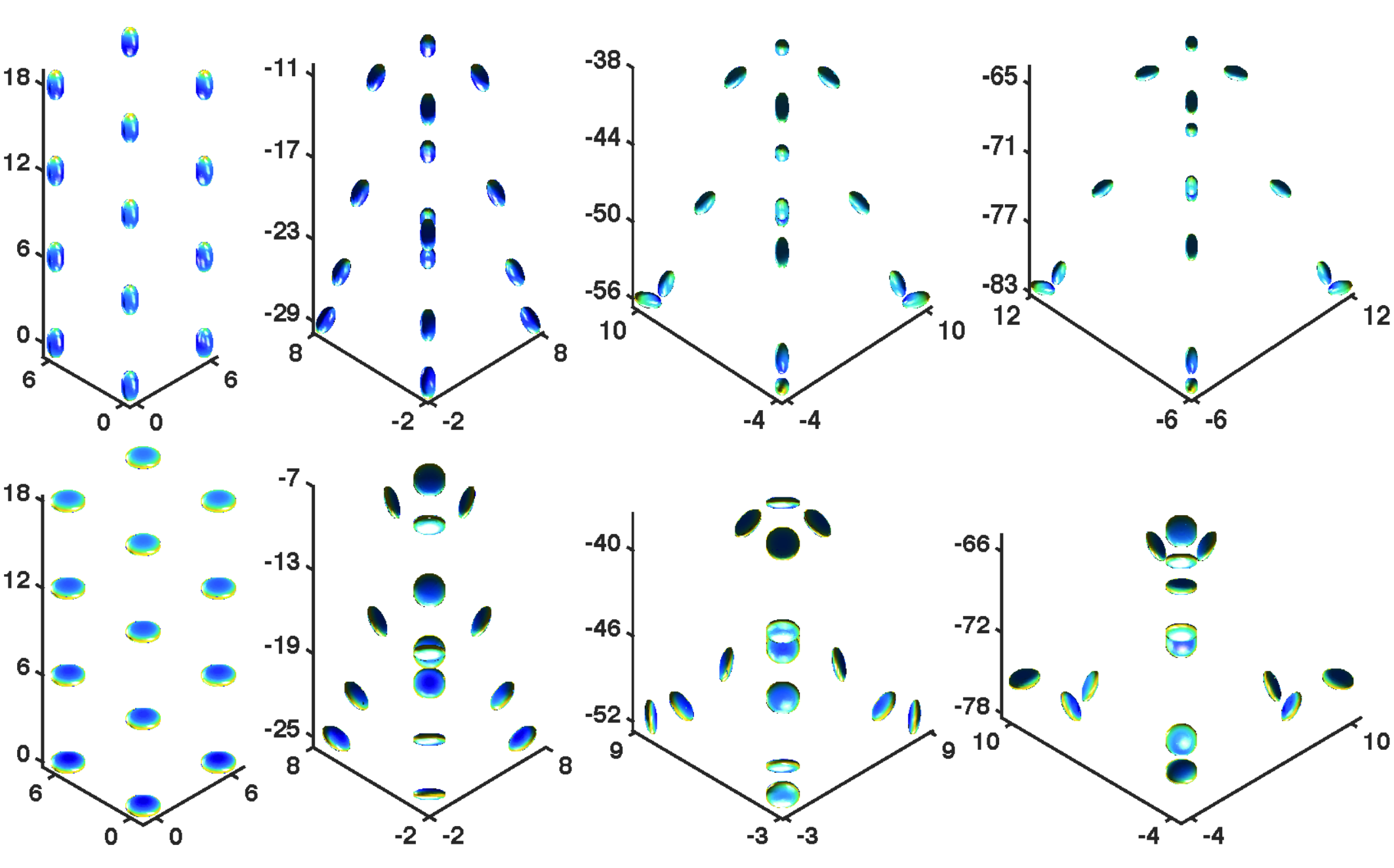}
\mcaption{fig:MHD_Gravity_prolatevsoblate}{Sedimentation flow - regular lattice}{Simulation of sedimentation for two lattices of ellipsoidal objects of equal weight. Color scheme corresponds to the magnitude of the total force field $||\rho_i + \mu_i||$. \emph{Top:} snapshots for sedimentation of prolate ellipsoids, with semi-axes $(\frac{1}{2},\frac{1}{2},1)$. \emph{Bottom:} snapshots for sedimentation of oblate ellipsoids, with semi-axes $(1,1,\frac{1}{3})$.}
\label{fig:gravity}\end{figure}

\subsubsection{Low-Re swimmer models}
One of the archetypal physical models to study swimming at low Reynolds numbers is the Najafi-Golestanian model \cite{najafi2004simple}, comprised of three aligned spheres connected by two arms. Because of the time-reversal symmetry of the Stokes equations, these arms must undergo a series of deformations asymmetric in time in order for the swimmer to locomote. The study and design of swimmers at small scales thus often relies on models that prescribe a series of strokes (displacements) for each arm between linked objects.

Alternatively, exploiting the linearity of the mobility problem (\ref{eq:mobility}), we opt for a model with periodic force and torque prescription for illustration purposes\footnote{Note that in ongoing work we have also developed methods to obtain forces and torques from a set of prescribed strokes using the mobility problem formulation.}. The simplest instance consists of three aligned spheres with prescribed periodic, oscillating forces that sum to zero and no external torques:  
\begin{equation}
\vc[F]_1(t) = (2\cos t + \sin t) \vc[e_1], \, \vc[F]_2(t) = (\sin t - \cos t) \vc[e_1], \ \vc[F]_3(t) = (-\cos t - 2\sin t) \vc[e_1], \quad \left\{\vc[T]_i\right\}_{i=1}^3 = \vc[0].
\label{eq:3-sphere} \end{equation}
Due to the break in symmetry between the net forces for the two consecutive pairs of spheres, this arrangement has a net positive displacement on each time period $t \in [2\pi k , 2\pi (k+1) ]$. Since forces and torques are prescribed, setting up the BIE formulation and its numerical solution proceeds the same way as in Algorithm 1.  We depict the self-locomotion of this 3-sphere swimmer in Figure \ref{fig:3-sphere}. In addition, we use this model to test the convergence properties of our numerical algorithm in Section \ref{sc:convergence}.  

\begin{figure}[H]
\centering
\includegraphics[width=0.3\textwidth]{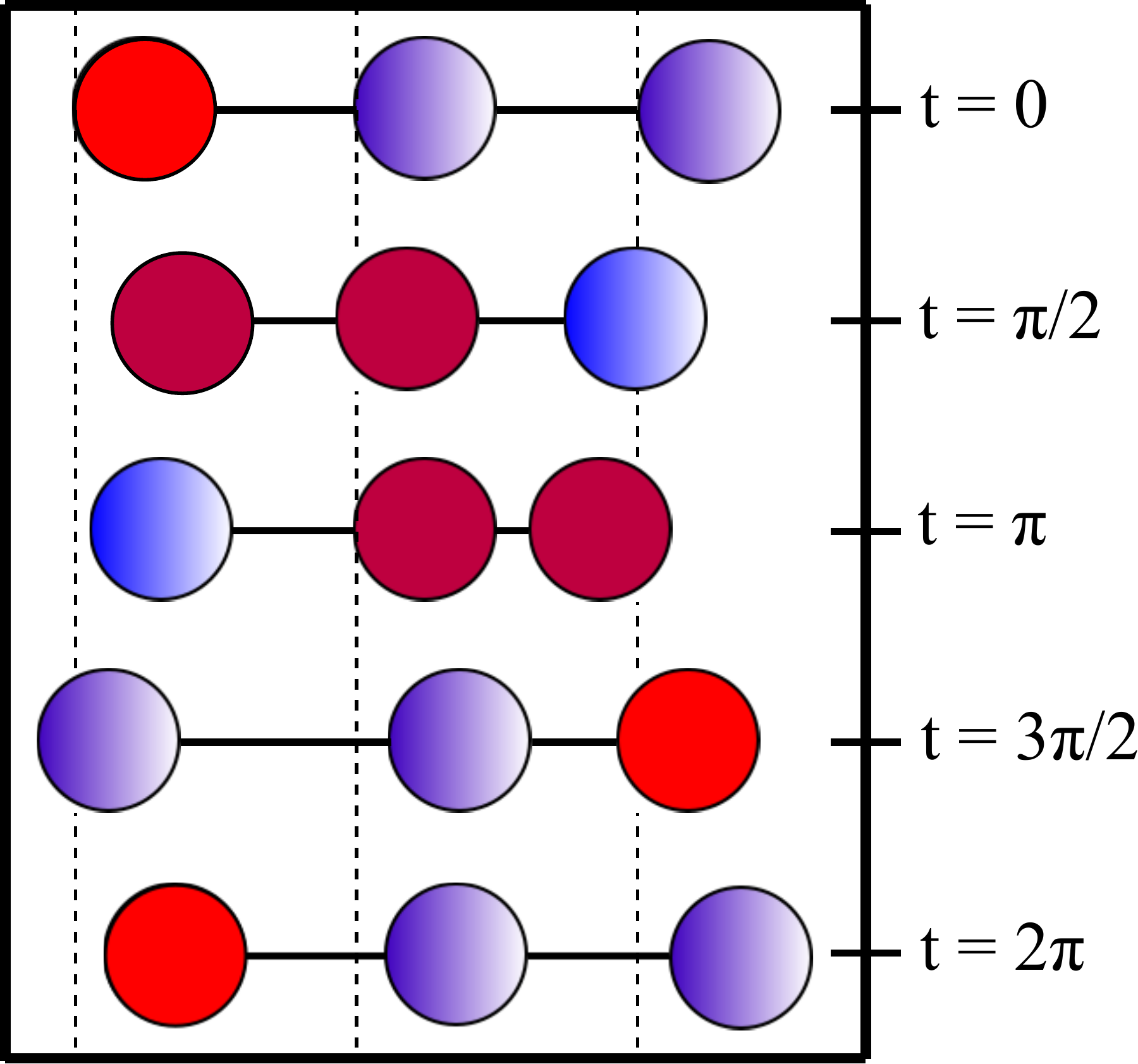}
\mcaption{fig:3sphswim_diagram}{Force based 3-sphere swimmer}{Motion of the 3-sphere swimmer with prescribed forces and torques given in \eqref{eq:3-sphere}. Spheres are colored on a linear gradient according to the force applied, with $F=2$ in red and $F=-2$ in blue. At the end of each cycle, the swimmer has a positive net displacement.}
\label{fig:3-sphere} \end{figure}

\subsubsection{Magnetorheological fluid flows}

A range of problems of interest in smart material design feature the study of colloidal suspensions which, when subjected to uniform magnetic fields \cite{maxeykeaveny} (or electric fields \cite{das2013ehd}), change their apparent viscosity and thus, their response to stress. These fluids are used in smart damping technology, with industrial, military and biomedical applications. 

In order to study the rheology of these suspensions, especially at high concentration, a coupled system of evolution equations need to be solved to capture magnetic and hydrodynamic interactions. We assume the fluid to have zero susceptibility, and an absence of free currents. This allows us to represent the corresponding field as the negative gradient of a scalar potential $\phi$ and reduces the static Maxwell equations to a Laplace equation for $\phi$ with prescribed jump conditions at the particle boundaries. For example, for the magnetostatic case, the potential must satisfy the following equations:

\begin{equation} \Delta \phi (\vc[x]) = 0 \ \ \ \forall \, \vc[x] \notin \Gamma, \label{eq:laplace}
\end{equation}

\begin{equation} [[\phi]]_{\Gamma} = 0, \ \ \ \left[ \left[ \mu \frac{\partial \phi}{\partial \vc[r]} \right] \right]_{\Gamma} = 0, \label{eq:lap_jump_conditions} \end{equation}

\begin{equation} \phi \rightarrow -\vc[H_0] \cdot \vc[r] \ \ \mathrm{as} \ \ |\vc[x]| \rightarrow \infty, \label{eq:laplace_infinity} \end{equation}
where $\mu$ is the magnetic permeability and $\vc[H]_0$ is the imposed magnetic field. We note that standard second-kind integral equation formulations exist for this problem \cite{ly1999MHD}.  If we represent $\phi = -\vc[H]_0 \cdot \vc[x] + \cA[S]^{L}[q](\vc[x])$, where $\cA[S]^{L}$ is the Laplace single layer potential, $\cA[K]^{L}$ its normal derivative and $\eta = \frac{\mu - \mu_0}{\mu + \mu_0}$, enforcing the standard jump conditions \cite{kressbook} leads to the equation: 

\begin{equation} \left(\frac{1}{2} I + \eta \cA[K]^{L}_{\Gamma}\right)[q](x) = \eta \vc[H]_0 \cdot \vc[n]. \label{eq:magnetic} \end{equation}

In order to couple \pr{eq:magnetic} with our integral equation method for the fluid velocity, we recall that since the fluid medium is assumed to be insusceptible, the only interaction between them occurs through the traction force applied to the particle surfaces. For a given configuration of rigid bodies, we obtain the corresponding scalar potential $\phi$, and set the incident force field density $\rho_i$ to be the corresponding traction on surface $\Gamma_i$, which is computed using the Maxwell stress tensor. 

We present a few characteristic examples of magnetorheological flow of suspensions of paramagnetic beads, following \cite{maxeykeaveny}. For two spherical beads, subjecting them to a uniform magnetic field $\vc[H]_0$ produces repulsive forces if $\vc[H]_0$ is perpendicular to the line that crosses their centers (see \pr{fig:MHD_repel}), and attractive forces if it is parallel. In the general case, it has been observed that particles tend to align, forming chains along the lines of magnetic field flux, as in Figures \ref{eyecandy}, \ref{fig:MHD_triangle} and \ref{fig:MHD_chain}. \\

\begin{figure}[H]
\centering
\includegraphics[width=0.8\textwidth]{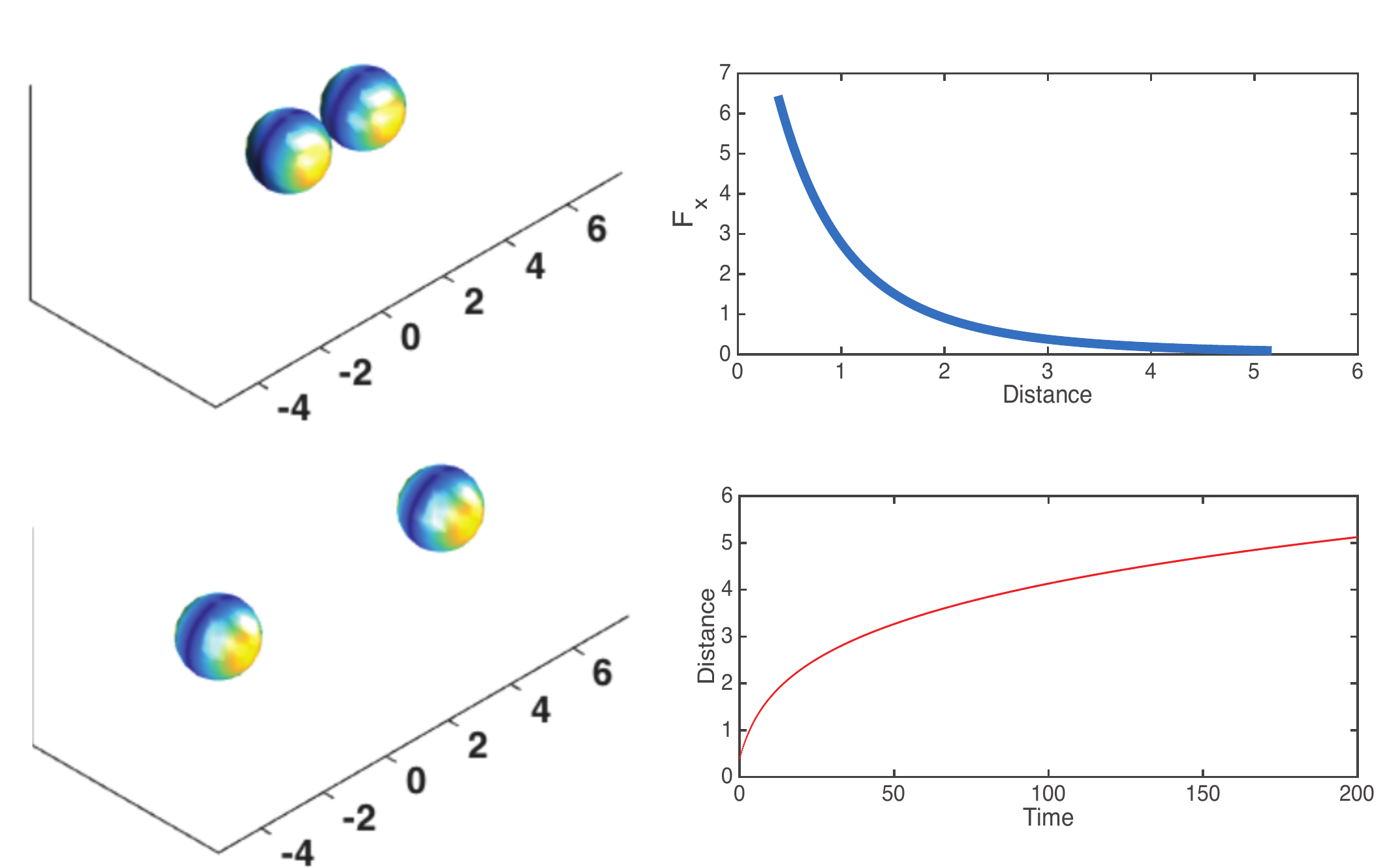}
\mcaption{fig:MHD_repel}{MHD flow - two sphere repulsion}{Simulation of two spheres of radius $1$ in the x-axis, subjected to the uniform field $\vc[H]_0 = (0,10,0)$, with $\mu / \mu_0 = 2$. \emph{Left:} snapshots of simulation at $t=0$ and $t=200$. Color scheme is proportional to magnitude of incident force density $||\rho_i(x)||$. \emph{Top right:} repulsion force magnitude as a function of sphere separation. \emph{Bottom right:} separation as a function of time for $t \in [0,200]$.}
\end{figure}

\begin{figure}[H]
\centering
\includegraphics[width=\textwidth]{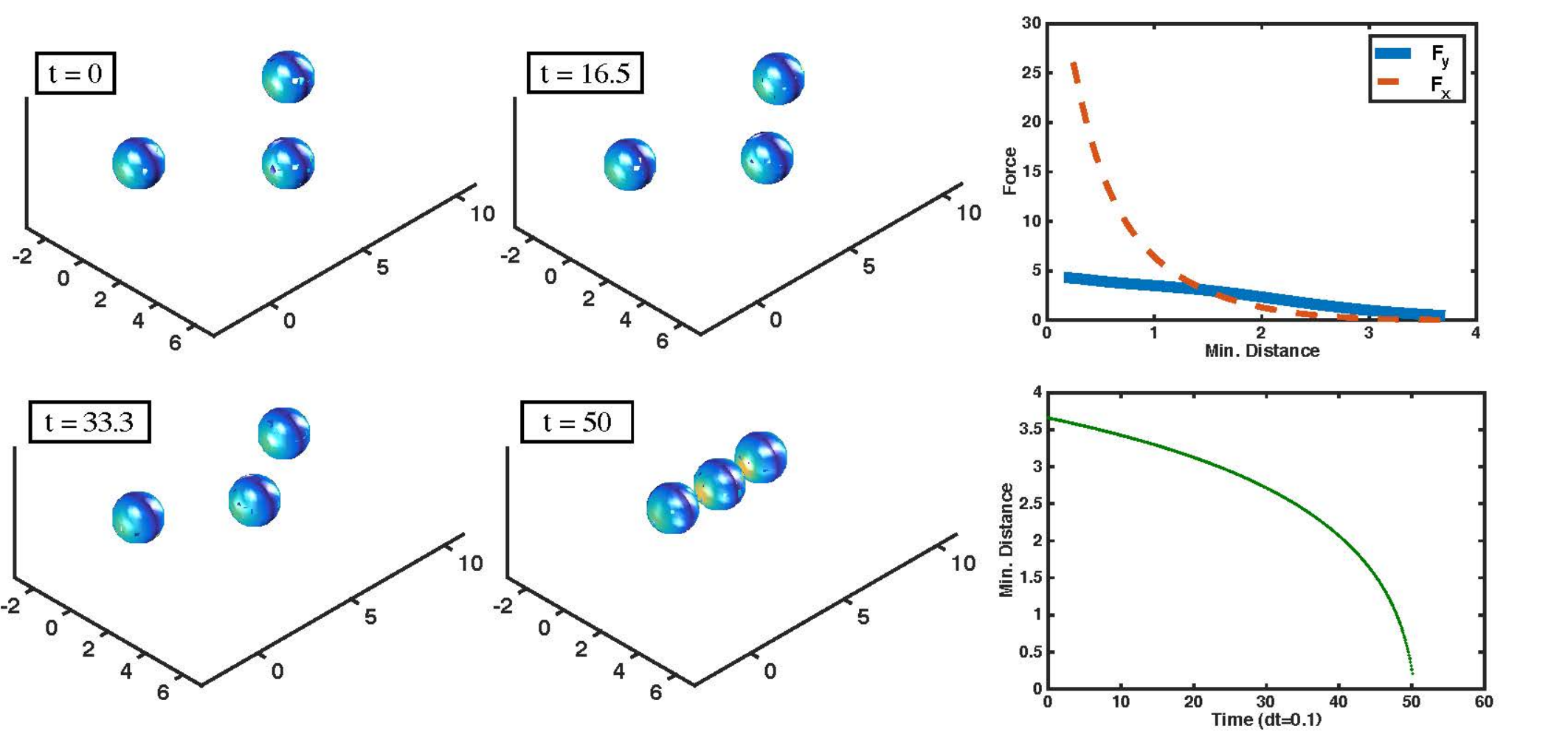}
\mcaption{fig:MHD_triangle}{MHD flow - triangular configuration}{Simulation of a triangular array of spheres of radius $1$, with centers $(0,0,0)$,$(0,8,0)$ and $(4,4,0)$ subjected to the uniform field $\vc[H]_0 = (0,10,0)$, with $\mu / \mu_0 = 2$. \emph{Top:} snapshots at $t=0,16.5,33.3$ and $t=50$. Spheres are colored according to the magnitude of incident force density $||\rho_i(x)||$. \emph{Bottom left:} attractive force magnitude in $x$ and $y$ as a function of minimum sphere separation. \emph{Bottom right:} separation as a function of time $t \in [0,50]$.}
\end{figure}

\begin{figure}[H]
\centering
\includegraphics[width=0.8\textwidth]{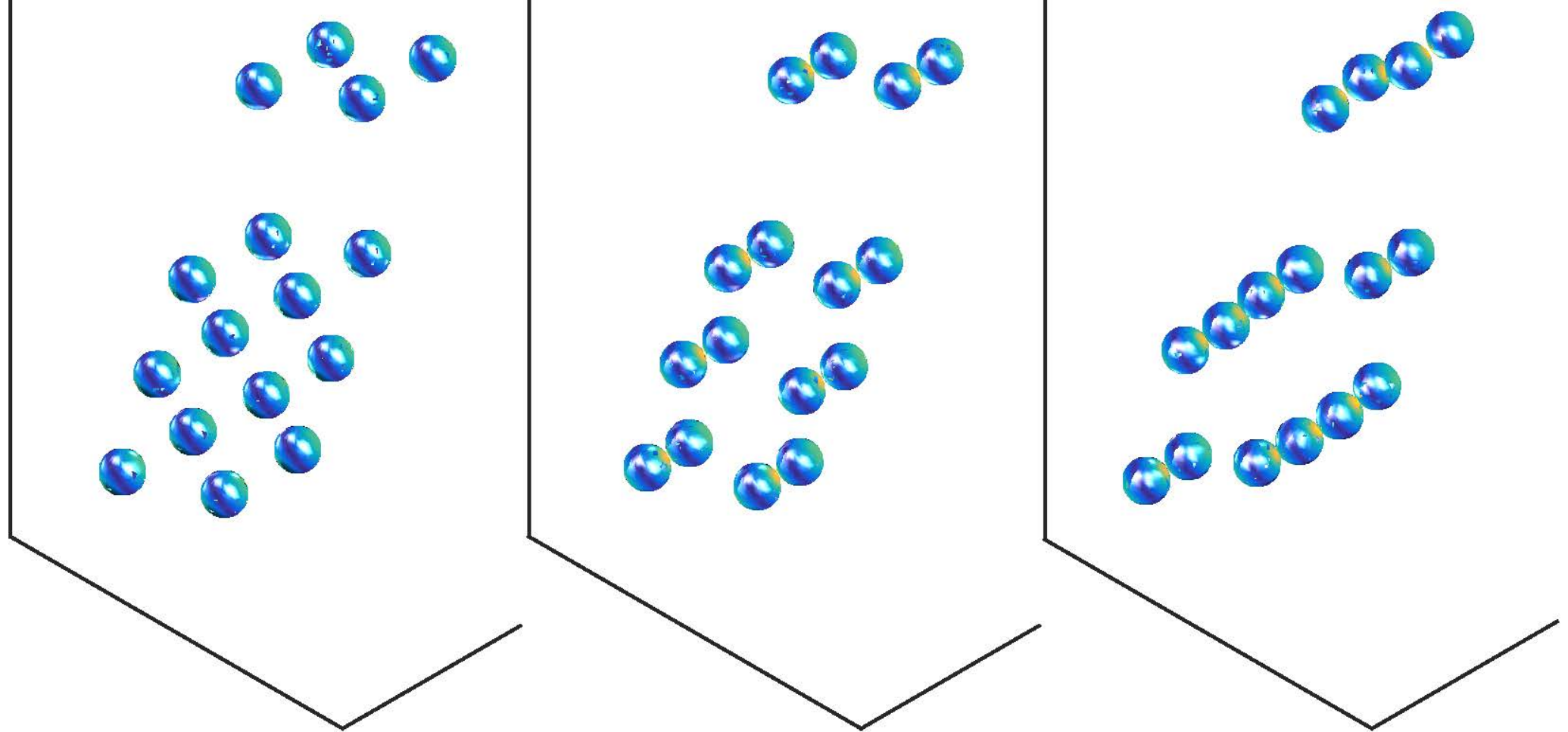}
\mcaption{fig:MHD_chain}{MHD flow - chain formation}{Simulation of spheres of radius $1$ with centers on a perturbed lattice, subjected to the uniform field $\vc[H]_0 = \frac{10}{\sqrt{3}}(1,-1,-1)$, with $\mu / \mu_0 = 2$. Spheres are colored according to the magnitude of incident force density $||\rho_i(x)||$. \emph{Left:} initial configuration. \emph{Middle: } pairs form among closest spheres. \emph{Right:} pairs begin to coalesce into larger chains, aligned with the magnetic field.}
\end{figure}

\subsection{Scaling tests}
\label{sec:scaling}

In order to test experimental scaling for the computational costs of our method, we set a sedimentation simulation with an initial configuration of $2 \times 2 \times n_z$ spheres of radius $1$ on a regular lattice with center spacing equal to $5$. We then evolve the system for $100$ timesteps and record the average time our solver takes per timestep. We test scaling with respect to the number of objects $n=4 n_z$ for $n=\{32,64,128\}$ as well as the order $p$ of spherical harmonic approximations by comparing results for $p=8$ and $p=16$. All tests are performed serially on Intel~Xeon~E--$2690$~v$2$($3.0$~GHz) nodes with $24$~GB of memory.  

\begin{table}[!htb]
  \newcolumntype{M}{>{\centering\let\newline\\\arraybackslash\hspace{0pt}$}c<{$}}
  \newcolumntype{N}[1]{>{\centering\let\newline\\\arraybackslash\hspace{0pt}}D{.}{.}{#1}}
  \centering
  \small
  \begin{tabular}{N{3.0} N{3.2} N{3.2} N{3.2} N{3.2}}\toprule
    \multirow{2}{*}[-5pt]{$n$}
    & \multicolumn{2}{c}{$p=8$}
    & \multicolumn{2}{c}{$p=16$}
     \\ 
    & N & \multicolumn{1}{c}{\parbox[c]{.6in}{\centering Time~(sec)}}
    & N & \multicolumn{1}{c}{\parbox[c]{.6in}{\centering Time~(sec)}}
    \\ \midrule
    32   & 13824  & 12.88  & 52224   & 132.3 \\
    64   & 27648  & 24.97  & 104448 & 272.4  \\
    128 & 55296  & 49.49  & 208896 & 544.2  \\
     \bottomrule
  \end{tabular}
  \mcaption{tbl:scaling}{Sphere lattice total per timestep}{Average timings (in seconds) per timestep for the evolution under gravity of a $2 \times 2 \times n_z$ regular lattice of spheres of radius $1$ and spacing $5$.}
\end{table}

Theoretical scaling for the matrix apply is $\O{n ( p^4+p^2)}$, since it involves the computation of self-interactions (dense apply of $n$ matrices of size $6p(p+1) \times 6p(p+1)$) and one particle Stokes FMM, which is $\O{N}=\O{p^2n}$. As we observe in our profiling, computational cost per timestep is largely dominated by the solution of the integral equation for the scattered field density (using GMRES). Hence, the experimental scaling observed matches that of the fast matrix apply: we observe linear scaling with respect to $n$, and doubling $p$ increases the cost by a factor of $11$ for the cases observed. 

\subsubsection*{Profiling}

For each of the experiments described above, we measure average timings for the steps in \pr{alg:RBS}: incident field density computation (\pr{lin:inc}), scattered field density solve (\pr{lin:solve}), rigid body velocity evaluation (\pr{lin:eval}), collision detection (\pr{lin:contact}), and operator update (\pr{lin:opdiag} - \ref{lin:opoffd}).

We observe that computational cost is largely dominated by the solve, taking up about $80\%$ of the total time per timestep. It takes $4$-$5$ iterations for the preconditioned GMRES to reach target accuracy, which we set at $10^{-6}$. We note that iteration counts and average solve times, while largely independent of $n$ and $p$, tend to increase in cases where surfaces come close to contact. This may be addressed by the use of more adequate preconditioners (such as sparse approximate inverse (SPAI), multilgrid or low accuracy direct solvers). 

In the operator update stage, we update self-interaction matrices as well as the block-diagonal preconditioner to match the new center positions and rotation matrices. This stage takes up $10$-$15\%$ of the total time. The cost of evaluating $\vc[u]$ and computing rigid body velocities is essentially that of a fast matrix-vector apply, accounts for most of the remaining timings observed. 

\begin{table}[!htb]
  \newcolumntype{M}{>{\centering\let\newline\\\arraybackslash\hspace{0pt}$}c<{$}}
  \newcolumntype{N}[1]{>{\centering\let\newline\\\arraybackslash\hspace{0pt}}D{.}{.}{#1}}
  \centering
  \small
  \begin{tabular}{N{3.0} N{3.2} N{3.2} N{3.2} N{3.2} M N{3.2} N{3.2} N{3.2} N{3.2}}\toprule
    \multirow{2}{*}[-5pt]{$n$}
    & \multicolumn{4}{c}{$p=8$} &
    & \multicolumn{4}{c}{$p=16$}
     \\ 
    & N 
    & \multicolumn{1}{c}{\parbox[c]{.6in}{\centering Apply~(sec)}}
    & \multicolumn{1}{c}{\parbox[c]{.6in}{\centering Solve~(sec)}}
    & \multicolumn{1}{c}{\parbox[c]{.6in}{\centering Update~(sec)}} &
    & N 
    & \multicolumn{1}{c}{\parbox[c]{.6in}{\centering Apply~(sec)}}
    & \multicolumn{1}{c}{\parbox[c]{.6in}{\centering Solve~(sec)}}
    & \multicolumn{1}{c}{\parbox[c]{.6in}{\centering Update~(sec)}}
    \\ \midrule
    32   & 13824  & 1.09 & 10.25 & 1.53  & & 52224   & 10.7 & 108.9 & 12.65 \\
    64   & 27648  & 2.26 & 19.51 & 3.19  & & 104448 & 21.5 & 220.3 & 30.49  \\
    128 & 55296  & 4.75 & 38.41 & 6.30  & & 208896 & 42.9 & 439.9 & 61.18  \\
     \bottomrule
  \end{tabular}
  \mcaption{tbl:profiling}{Sphere lattice profile per timestep}{Average timings (in seconds) per timestep for scattered field density solve and operator updates. Matrix-vector apply time is included to account for velocity evaluation, as well as for comparison with the GMRES solve. Incident field density computation is negligible, and no contact force correction was necessary for the experiments presented.}
\end{table}

The same set of tests were ran for regular lattices of ellipsoids with semi-axes $(a,b,c) = (1,0.5,0.5)$ and $(a,b,c) = (1,1,0.5)$. We note that, while showing a slight increase in the solve stage (corresponding to iteration counts of $5$-$6$ iterations for the preconditioned GMRES), their performance in terms of scaling and profiling was similar to that presented above. 

\subsection{Convergence tests}
\label{sc:convergence}

In order to test convergence of our method with respect to the integral operator discretization (controlled by the order $p$ of the spherical harmonic appproximation) and the proposed time-stepping schemes, we set a series of experiments for a swimmer comprised either of three spheres or of three ellipsoids with the same proportions. We apply periodic, oscillatory forces in the $x$-axis, as well as oscillatory torques along the $y$-axis. 

\begin{equation*}
\vc[F]_1(t) = (2\cos t + \sin t) \vc[e_1], \ \vc[F]_2(t) = (\sin t - \cos t) \vc[e_1], \ \vc[F]_3(t) = (-\cos t-2\sin t) \vc[e_1]
\end{equation*}
\begin{equation*}
\vc[T]_1(t) = (2\cos t + \sin t) \vc[e_2], \ \vc[T]_2(t) = (\sin t - \cos t) \vc[e_2], \ \vc[T]_3(t) = (-\cos t - 2 \sin t) \vc[e_2]
\end{equation*}

These modifications to the standard swimmer allow us to test convergence of both centers and rotation matrices, for both spherical and non-spherical shapes. At the end of each cycle of size $2\pi$, the three object swimmer displays net positive displacement along the $x$-axis, as well as slight displacements in the $z$-axis.  

\subsubsection*{Spatial discretization}

We evolve one three-object swimmer for a full cycle $t \in [0,2\pi]$, for $p \in \{ 2,4,8,16\}$, using a forward euler time-stepping scheme with $\Delta t = \frac{2 \pi}{128}$. At the final time $T = 2\pi$, we test self-convergence of the rigid object centers and rotations by measuring the following quantities: 

\begin{equation*} \mathcal{E}_C (T,p) = -\log_2 \underset{i \leq 3}{max} || C_i^p (T) - C_i^{2p}(T) ||_2 \end{equation*}
\begin{equation*} \mathcal{E}_R (T,p) = - \log_2 \underset{i \leq 3}{max} || R_i^p (T) - R_i^{2p}(T) ||_F \end{equation*}

We compare results for spheres and ellipsoids with semi-axes $(a,a,1)$ for $a=\frac{1}{2},\frac{1}{4}$. 

\begin{table}[!htb]
  \newcolumntype{M}{>{\centering\let\newline\\\arraybackslash\hspace{0pt}$}c<{$}}
  \newcolumntype{N}[1]{>{\centering\let\newline\\\arraybackslash\hspace{0pt}}D{.}{.}{#1}}
  \centering
  \small
  \begin{tabular}{N{3.5} N{3.2} N{3.2} N{3.2} N{3.2}}\toprule
    &
    & \multicolumn{1}{c}{\parbox[c]{.6in}{\centering $p=2$}}
    & \multicolumn{1}{c}{\parbox[c]{.6in}{\centering $p=4$}}
    & \multicolumn{1}{c}{\parbox[c]{.6in}{\centering $p=8$}}
    \\ \midrule
    \multirow{2}{*}[-5pt]{Sphere} & \mathcal{E}_C (T,p)  & 5.67 & 14.86 & 29.98   \\
    & \mathcal{E}_M (T,p)  & 9.31 & 15.19 & 29.93   \\ \midrule
    \multirow{2}{*}[-5pt]{Ellipsoid $\frac{1}{2}$} & \mathcal{E}_C (T,p)  & 3.76 & 7.12 & 13.96   \\
    & \mathcal{E}_M (T,p)  & 5.13 & 9.77 & 15.82   \\ \midrule
    \multirow{2}{*}[-5pt]{Ellipsoid $\frac{1}{4}$} & \mathcal{E}_C (T,p)  & -  & 7.53  & 10.83   \\
    & \mathcal{E}_M (T,p)  & - & 7.98  & 10.95    \\
     \bottomrule
  \end{tabular}
  \mcaption{tbl:convp}{Spatial discretization self-convergence}{Error terms for center and rotation matrix evolution of three-object swimmers at final time $T = 2\pi$ for for $p \in \{ 2,4,8,16\}$.}
\end{table}

For both center and rotation matrix evolution we compare the columns in \pr{tbl:convp} and observe that the convergence rate is proportional to $p$. This is in accordance with the theoretical spectral convergence of the quadrature rules employed. We note that, as we increase surface complexity (in this case dependent on the eccentricity of the ellipsoids), the approximation order $p$ required to reach a certain target accuracy increases.   

\subsubsection*{Time discretization}

We test the rate of convergence with respect to $\Delta t$ of our proposed time-stepping scheme for trapezoidal and Runge-Kutta 4 methods. We again evolve a three-object swimmer for a full cycle $t \in [0,2\pi]$, for $p = 8$ and $\Delta t = 2\pi / \{16,32,\dots,256\}$. At the final time $T=2\pi$, we measure the following quantities: 

\begin{equation*} \mathcal{E}_C (T,\Delta t) = -\log_2 \underset{i \leq 3}{max} || C_i^{\Delta t} (T) - C_i^{\Delta t/2}(T) ||_2 \end{equation*}
\begin{equation*} \mathcal{E}_R (T,\Delta t) = - \log_2 \underset{i \leq 3}{max} || R_i^{\Delta t} (T) - R_i^{\Delta t / 2}(T) ||_F \end{equation*}

\begin{table}[!htb]
  \newcolumntype{M}{>{\centering\let\newline\\\arraybackslash\hspace{0pt}$}c<{$}}
  \newcolumntype{N}[1]{>{\centering\let\newline\\\arraybackslash\hspace{0pt}}D{.}{.}{#1}}
  \centering
  \small
  \begin{tabular}{N{3.5} N{3.2} N{3.2} N{3.2} N{3.2} N{3.2}}\toprule
    &
    & \multicolumn{1}{c}{\parbox[c]{.6in}{\centering $\Delta t=\frac{2\pi}{16}$}}
    & \multicolumn{1}{c}{\parbox[c]{.6in}{\centering $\Delta t=\frac{2\pi}{32}$}}
    & \multicolumn{1}{c}{\parbox[c]{.6in}{\centering $\Delta t=\frac{2\pi}{64}$}}
    & \multicolumn{1}{c}{\parbox[c]{.6in}{\centering $\Delta t=\frac{2\pi}{128}$}}
    \\ \midrule
    \multirow{2}{*}[-5pt]{Sphere} & \mathcal{E}_C (T,\Delta t)  & 8.61 & 11.59 & 14.21 & 16.67  \\
    & \mathcal{E}_M (T,\Delta t)  & 6.26 & 8.80 & 11.15 & 13.16 \\ \midrule
    \multirow{2}{*}[-5pt]{Ellipsoid $\frac{1}{2}$} & \mathcal{E}_C (T,\Delta t)  & 9.63 & 11.98 & 14.20 & 15.99 \\
    & \mathcal{E}_M (T,\Delta t)  & 7.87 & 9.87 & 11.87 & 13.86 \\ \midrule
    \multirow{2}{*}[-5pt]{Ellipsoid $\frac{1}{4}$} & \mathcal{E}_C (T,\Delta t)  &  7.17 & 9.77 & 12.25 & 14.58  \\
    & \mathcal{E}_M (T,\Delta t)  & 5.36  & 7.85 & 10.26 & 12.52  \\
     \bottomrule
  \end{tabular}
  \mcaption{tbl:trapz_convdt}{Trapezoidal method self-convergence}{Error terms for center and rotation matrix evolution of three-object swimmers at final time $T = 2\pi$ for the trapezoidal method, for $p=8$ and $\Delta t \in \{ \frac{2\pi}{16},\frac{2\pi}{32},\frac{2\pi}{64},\frac{2\pi}{128}\}$.}
\end{table}

\begin{table}[!htb]
  \newcolumntype{M}{>{\centering\let\newline\\\arraybackslash\hspace{0pt}$}c<{$}}
  \newcolumntype{N}[1]{>{\centering\let\newline\\\arraybackslash\hspace{0pt}}D{.}{.}{#1}}
  \centering
  \small
  \begin{tabular}{N{3.5} N{3.2} N{3.2} N{3.2} N{3.2} N{3.2}}\toprule
    &
    & \multicolumn{1}{c}{\parbox[c]{.6in}{\centering $\Delta t=\frac{2\pi}{16}$}}
    & \multicolumn{1}{c}{\parbox[c]{.6in}{\centering $\Delta t=\frac{2\pi}{32}$}}
    & \multicolumn{1}{c}{\parbox[c]{.6in}{\centering $\Delta t=\frac{2\pi}{64}$}}
    & \multicolumn{1}{c}{\parbox[c]{.6in}{\centering $\Delta t=\frac{2\pi}{128}$}}
    \\ \midrule
    \multirow{2}{*}[-5pt]{Sphere} & \mathcal{E}_C (T,\Delta t)  & 19.75 & 23.77 & 27.78 & 31.80  \\
    & \mathcal{E}_M (T,\Delta t)  & 21.49 & 25.50 & 29.51 & 33.50 \\ \midrule
    \multirow{2}{*}[-5pt]{Ellipsoid $\frac{1}{2}$} & \mathcal{E}_C (T,\Delta t)  & 18.32 & 22.33 & 26.34 & 30.34 \\
    & \mathcal{E}_M (T,\Delta t)  & 18.48 & 22.49 & 26.51 & 30.52 \\ \midrule
    \multirow{2}{*}[-5pt]{Ellipsoid $\frac{1}{4}$} & \mathcal{E}_C (T,\Delta t)  &  17.01 & 21.01 & 25.02 & 29.01  \\
    & \mathcal{E}_M (T,\Delta t)  & 16.94  & 21.00 & 25.03 & 29.06  \\
     \bottomrule
  \end{tabular}
  \mcaption{tbl:rk4_convdt}{Runge-Kutta 4 method self-convergence}{Error terms for center and rotation matrix evolution of three-object swimmers at final time $T = 2\pi$ for the explicit Runge-Kutta method of order $4$, for $p=8$ and $\Delta t \in \{ \frac{2\pi}{16},\frac{2\pi}{32},\frac{2\pi}{64},\frac{2\pi}{128}\}$.}
\end{table}

By comparing the columns in \pr{tbl:trapz_convdt} and \pr{tbl:rk4_convdt}, we confirm that for both center and rotation matrix evolution, the proposed explicit Runge-Kutta time-stepping schemes display second and fourth order convergence, respectively. 

\section{Conclusions}
\label{sec:conclusions}
We presented a new BIE formulation for the mobility problem in three dimensions. Its main advantages are that auxiliary sources inside each rigid body---required by classical approaches such as the completed double-layer formulation \cite{pozrikidis1992boundary}---are obviated, thereby reducing the number of unknowns, and that hypersingular integrals are avoided when computing the hydrodynamic stresses. The extra linear conditions in the case of the scattered field problem i.e., zero net force and torque constraints on the particles, are imposed by simply modifying the BIEs for the densities. Unlike classical formulations, the translational and rotational velocities of each particle are not the fundamental variables but are determined by integrating the surface velocities of each particle.

The numerical method proposed is spectrally-accurate in space via the use of spherical harmonics to represent the geometries and densities and a pole-rotation based singular integration scheme. We couple Stokes FMM with a preconditioned GMRES solver to obtain a solution method that scales linearly with the number of particles. A fourth-order explicit Runge-Kutta method was then used to evolve the particle positions. We presented a series of numerical results that verified the convergence and scaling analysis of our method. 

The mobility problem arises naturally in many physical models, and we demonstrated the applicability of our solution process to sedimentation problems, low-Re swimmer models and magneto-rheological fluid models. The method naturally extends to another important setting, that of when a background flow is imposed. It can be in the form of free-space velocity field i.e., $\vc[u](\vc[x]) \rightarrow \vc[u]_\infty(\vc[x]) \ \ \mathrm{as} \ \ |\vc[x]| \rightarrow \infty$, or disturbance velocity field in the case of flows through constrained geometries. In both cases, due to the linearity of Stokes flow, the imposed field simply is added to the boundary integral representation of the scattered field e.g., $\vc[u]_{sc} = \vc[u]_\infty + \cA[S]_{\Gamma}[\vc[\mu]]$. The BIEs are then modified accordingly. 

Robust simulation of {\em dense} particle suspensions requires several more algorithmic components. When the particles are located close to each other, the interaction terms become nearly singular and specialized quadratures are required to achieve uniform high-order convergence. We plan to extend the recently developed quadrature-by-expansion (QBX) \cite{klockner2013quadrature} to three dimensions. While the recent work of \cite{klinteberg2016fast} implemented a QBX method for three-dimensional problems, the particle shapes were restricted to spheroids. Adaptive time-stepping also becomes important to resolve close interactions. Finally, to enable simulations through periodic geometries, we plan to extend the recently developed periodization scheme in \cite{marple2015fast} to the mobility problem. This scheme uses free-space kernels only and the FMM can be used to accelerate the discrete sums as we do in this work.

\section{Acknowledgements}
EC and SV acknowledge support from NSF under grants DMS-1224656, DMS-1454010
and DMS-1418964 and a Simons Collaboration Grant for Mathematicians \#317933. 
LG and MR acknowledge support from the U.S. Department of Energy Office of Science, Office of Advanced Scientific Computing Research, Applied Mathematics program under Award Number 
DEFGO288ER25053 and the Office of the Assistant Secretary of Defense for Research and 
Engineering and AFOSR under NSSEFF Program Award FA9550-10-1-0180.

\bibliography{rbs3d}
\end{document}